\theoremstyle{definition}
\newtheorem{defn}{Definition}[section]
\newtheorem{example}[defn]{Example}
\newtheorem{remark}[defn]{Remark}
\theoremstyle{plain}
\newtheorem{prop}[defn]{Proposition}
\newtheorem{thm}[defn]{Theorem}
\newtheorem{cor}[defn]{Corollary}
\newcommand\restr[2]{{
  \left.\kern-\nulldelimiterspace 
  #1 
  \vphantom{\big|} 
  \right|_{#2} 
  }}
\newcommand{\F}{\mathcal{F}}
\DeclareMathOperator{\Aut}{\mathrm{Aut}}
\DeclareMathOperator{\Homeo}{\mathrm{Homeo}}
\renewcommand{\Bbb}{\mathbb}
\newcommand{\Fraisse}{Fra\"{i}ss\'{e}\xspace}
\renewcommand{\L}{\mathcal{L}}
\theoremstyle{definition}
\theoremstyle{remark}
\newtheorem*{theorem*}{Theorem}
\newtheorem*{cor*}{Corollary}
\theoremstyle{definition}
\theoremstyle{definition}
\theoremstyle{definition}
\theoremstyle{remark}
\providecommand*{\cupdot}{%
  \mathbin{%
    \mathpalette\@cupdot{}%
  }%
}
\newcommand*{\@cupdot}[2]{%
  \ooalign{%
    $\m@th#1\cup$\cr
    \sbox0{$#1\cup$}%
    \dimen@=\ht0 %
    \sbox0{$\m@th#1\cdot$}%
    \advance\dimen@ by -\ht0 %
    \dimen@=.5\dimen@
    \hidewidth\raise\dimen@\box0\hidewidth
  }%
}
\providecommand*{\bigcupdot}{%
  \mathop{%
    \vphantom{\bigcup}%
    \mathpalette\@bigcupdot{}%
  }%
}
\newcommand*{\@bigcupdot}[2]{%
  \ooalign{%
    $\m@th#1\bigcup$\cr
    \sbox0{$#1\bigcup$}%
    \dimen@=\ht0 %
    \advance\dimen@ by -\dp0 %
    \sbox0{\scalebox{2}{$\m@th#1\cdot$}}%
    \advance\dimen@ by -\ht0 %
    \dimen@=.5\dimen@
    \hidewidth\raise\dimen@\box0\hidewidth
  }%
}
\def\Ind#1#2{#1\setbox0=\hbox{$#1x$}\kern\wd0\hbox to 0pt{\hss$#1\mid$\hss}
\lower.9\ht0\hbox to 0pt{\hss$#1\smile$\hss}\kern\wd0}
\def\notind#1#2{#1\setbox0=\hbox{$#1x$}\kern\wd0
\hbox to 0pt{\mathchardef\nn=12854\hss$#1\nn$\kern1.4\wd0\hss}
\hbox to 0pt{\hss$#1\mid$\hss}\lower.9\ht0 \hbox to 0pt{\hss$#1\smile$\hss}\kern\wd0}
\newcommand{\RUC}{\mathrm{RUC}}
\newcommand{\Age}{\mathrm{Age}}
\newcommand{\Stab}{\mathrm{Stab}}
\newcommand{\A}{\mathcal{A}}
\newcommand{\X}{\mathcal{X}}
\renewcommand{\phi}{\varphi}
\renewcommand{\phi}{\varphi}
\author{Alessandro Codenotti}
\title{Fixed points on null and tame flows for groups of automorphisms}
\begin{document}

\begin{abstract}
    Using a generalization of the Kechris-Pestov-Todor\v{c}evi\'{c} correspondence due to Nguyen Van Th\'{e} we obtain fixed point theorems for null and tame actions of groups of the form $\Aut(\F)$, where $\F$ is a \Fraisse structure. In particular we show that if $\Age(\F)$ is a free joint embedding class, then every null flow $\Aut(\F)\curvearrowright X$ has a fixed point, while if $\Age(\F)$ is a free amalgamation class, then every tame flow $\Aut(\F)\curvearrowright X$ has a fixed point.
\end{abstract}

\maketitle

\section{Introduction}

Given a topological group $G$, which we will always assume to be Hausdorff, a \emph{$G$-flow}, or simply a \emph{flow} when $G$ is clear from context, is a compact space $X$ with a continuous action $G\curvearrowright X$. A $G$-flow is called \emph{minimal} if $\overline{G\cdot x}=X$ for every $x\in X$, equivalently if there are no proper, closed, $G$-invariant subspaces. A $G$-flow $Y$ is a \emph{factor} of a $G$-flow $X$ if there there is a continuous surjection $f\colon X\to Y$ which is $G$-equivariant, that is $$f(g\cdot x)=g\cdot f(x),\,\text{for every }x\in X,g\in G.$$
A classical result of Ellis shows that every topological group $G$ has a \emph{universal minimal flow}, that is a minimal $G$-flow which has all minimal $G$-flows as factors, moreover this flow is unique up to isomorphism \cite[Theorem 2]{ellis1960universal} so that we can talk about \emph{the} universal minimal flow of $G$, denoted by $M(G)$. A topological group $G$ is called \emph{extremely amenable} if every $G$-flow has a fixed point, or equivalently if $M(G)$ is a singleton. A celebrated result, now known as the KPT correspondence, established in \cite{KPT} by Kechris, Pestov and Todor\v{c}evi\'{c}, characterizes which groups of the form $\Aut(\F)$, where $\F$ is a \Fraisse structure, are extremely amenable, in terms of a Ramsey property for $\Age(\F)$. The KPT correspondence was generalized by Nguyen van Th\'{e} \cite[Theorem 1.2]{NVT}, where fixed points on a specified class of $\Aut(\F)$-flows are obtained from an approximate  version of the Ramsey property of $\Age(\F)$ for a specified family of colourings (see Theorem \ref{thm: NVT} for a precise statement). We focus on the classes of \emph{null} and \emph{tame} $\Aut(\F)$-flows, those being two notion of well-behavedness for flows (defined precisely in Definitions \ref{defn: null system} and \ref{defn: tame system}). The study of tame dynamical systems was initiated by K\"{o}hler \cite{Kohler} (under the name of \emph{regular} systems), who obtained a dicothomy for the enveloping semigroups of dynamical systems consisting of interval maps, based on the Bourgain-Fremlin-Talagrand dichotomy for pointwise compact sets of Baire class 1 functions \cite{bourgain1978pointwise}. The dynamical version of this dicothomy has more recently been extended to arbitrary metrizable systems in \cite{GMHNSsystems}, based on the notion of \emph{tameness}, that is the nonexistence of continuous function whose orbit contain a set equivalent to the standard basis of $\ell^1$ (indeed the roots of the Bourgain-Fremlin-Talagrand dicothomy trace back to Rosenthal's groundbreaking characterization of Banach spaces not embedding $\ell^1$ \cite{rosenthal1978some}). The theory of tame dynamical systems has been greatly developed by various authors over the past few decades, see in particular \cite{Glasner4} and the reference therein for a survey. This has produced many equivalent characterization of tameness, the most important for us being the one established in \cite{KerrLi} in terms of the nonexistence of infinite independent sets (see Definition \ref{defn: tame system}). The notion of nullness, initially defined in terms of sequence entropy, but later characterized in terms of the nonexistence of arbitrarily large (but finite) independence sets (see Definition \ref{defn: null system}), is a natural strengthening of the notion of tameness. We introduce two variants of the Ramsey property for the age of a \Fraisse class, closely related to the classes of flows mentioned above, which we call the null and tame Ramsey properties. We then show that free joint embedding classes satisfy the approximate null Ramsey property, while free amalgamation classes satisfy the approximate tame Ramsey property. Using the result by Nguyen van Th\'{e} mentioned above we translate this combinatorial properties of $\Age(\F)$ into dynamical statements about $\Aut(\F)$, in particular we obtain the following results (see Theorem \ref{thm: FJEP implies no nontrivial null flows} and Theorem \ref{thm: Henson, Hrushovski and FJEP implies no tame flows} for complete statements).

\begin{thm}\label{thm: main thm1}
    Let $\mathcal F$ be a \Fraisse structure such that $\Age(\F)$ has the free joint embedding property. Any null flow $\Aut(\F)\curvearrowright X$ has a fixed point. 
\end{thm}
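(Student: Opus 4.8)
The plan is to deduce Theorem \ref{thm: main thm1} from Nguyen Van Th\'{e}'s generalized KPT correspondence (Theorem \ref{thm: NVT}), so the real work is combinatorial: I must verify that when $\Age(\F)$ has the free joint embedding property, it satisfies the \emph{approximate null Ramsey property} for the family of colourings that corresponds, on the dynamical side, to null flows. First I would make precise the matching between the class of null $\Aut(\F)$-flows and a family $\mathcal{F}_P$ of finite colourings: the characterization of nullness via the nonexistence of \emph{arbitrarily large finite independence sets} (Definition \ref{defn: null system}) is exactly what one wants here, since an independence set of size $n$ for a tuple of continuous functions translates combinatorially into a colouring that is ``hard to stabilize'' on substructures of bounded size. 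So the first step is to fix the relevant family of colourings and state the approximate null Ramsey property as: for every finite $\mathbf{A}\le\mathbf{B}$ in $\Age(\F)$, every $k$, and every $\epsilon>0$, there is $\mathbf{C}\in\Age(\F)$ such that for every colouring of $\binom{\mathbf{C}}{\mathbf{A}}$ by a ``null-type'' pattern there is a copy of $\mathbf{B}$ in $\mathbf{C}$ on which the colouring is $\epsilon$-concentrated.

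The second, and main, step is to prove that free joint embedding classes have this property. Here the key structural feature is that in a free joint embedding class one can glue two structures along the empty substructure with \emph{no new relations imposed}, which gives enormous freedom to build the witness $\mathbf{C}$. Concretely, given $\mathbf{A}\le\mathbf{B}$, I would take $\mathbf{C}$ to be a large free amalgam (over $\emptyset$) of many disjoint copies of $\mathbf{B}$ — say $N$ of them for $N$ large depending on $|\mathbf{A}|$, $k$ and $\epsilon$ — and argue that because the copies of $\mathbf{A}$ that lie ``spread across'' several of these blocks carry no relational interaction, a bounded-complexity (null-type) colouring cannot distinguish too many of them; a pigeonhole/averaging argument over the $N$ blocks then produces a single block, i.e.\ a copy of $\mathbf{B}$, on which the colouring is nearly constant up to $\epsilon$. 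The point is that the ``null'' restriction on the colouring family — coming from bounded-size independence sets — is precisely the hypothesis that makes this counting work: an unrestricted colouring could of course shatter arbitrarily many copies of $\mathbf{A}$, but a colouring with no large independence set cannot, and freeness guarantees the copies of $\mathbf{A}$ we produce are ``independent enough'' to be shattered if the colouring were bad.

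The third step is the translation back: having established the approximate null Ramsey property for $\Age(\F)$, Theorem \ref{thm: NVT} immediately yields that every $\Aut(\F)$-flow in the corresponding class — namely every null flow — has a fixed point, which is the statement of Theorem \ref{thm: main thm1}. I expect the main obstacle to be step two, and within it the bookkeeping needed to match ``no independence set of size $n$'' on the dynamical side with the correct quantitative form of the approximate Ramsey statement on the combinatorial side: one has to be careful that the $\epsilon$-approximation and the bound $n$ on independence-set size interact correctly, so that the pigeonhole over blocks really does defeat every admissible colouring and not merely those of some smaller complexity. A secondary technical point is checking that free joint embedding indeed lets us realize the required free amalgam of $N$ copies of $\mathbf{B}$ inside a single member of $\Age(\F)$ with the copies of $\mathbf{A}$ sitting as genuine substructures — this should follow directly from the definition of the free joint embedding property together with the fact that $\Age(\F)$ is closed under substructures, but it is where the hypothesis is really used.
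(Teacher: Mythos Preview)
Your global architecture --- reduce to Nguyen Van Th\'{e}'s correspondence (Theorem~\ref{thm: NVT}) and verify the approximate null Ramsey property --- is exactly right, and your step~3 is fine. The gap is entirely in step~2, and it is a real one.

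First, a framing issue: in the paper the relevant colourings are functions $\chi\colon\binom{\F}{A}\to\Bbb R$ on the full \Fraisse limit (equivalently, right-uniformly continuous functions on $G=\Aut(\F)$), not colourings of $\binom{C}{A}$ for some finite $C$ to be found. Nullness is a property of such $\chi$ via Proposition~\ref{prop: Ibarlucia nullness} (the Kerr--Li/Ibarluc\'{i}a criterion). So the task is: given $A,B$ and a finite family of finite colourings in $(\A)_\varepsilon$, find $h\in\binom{\F}{B}$ on which they are all constant up to $2\varepsilon$.

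Second, and more seriously, the forward direction you sketch (``nullness bounds independence, so pigeonhole over $N$ blocks yields a nearly monochromatic block'') does not have a mechanism. Nullness says that for every $r<s$ there is \emph{some} $n(r,s)$ beyond which no witnessing system $(g_i)_{i<n},(x_I)_{I\subseteq n}$ exists; it does not hand you a uniform complexity bound you can feed into a counting argument, and there is no obvious way to convert ``no large independence set'' into ``some block is $\varepsilon$-monochromatic''. The paper instead proves the \emph{contrapositive}: assume some finite colouring $\chi$ fails the Ramsey property up to $2\varepsilon$ for $A,B$, and show $\chi\notin(\A)_\varepsilon$ by \emph{constructing}, for every $n$, the witnesses $(g_i)_{i<n}$ and $(x_I)_{I\subseteq n}$ required by Proposition~\ref{prop: Ibarlucia nullness}. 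This direction is constructive and is where the free joint embedding property is actually used.

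The construction you are missing is this. Embed $\bigsqcup_{i<\omega}B_i$ into $\F$ via free joint embedding; this gives infinitely many copies of $B$ with pairwise no relations between them. Two pigeonhole steps (over the finitely many $\chi_j$, then over the finitely many pairs of colours) produce a single $\chi$ and fixed values $k_0<k_1$ with $k_1-k_0>2\varepsilon$ such that infinitely many of these copies $h_I(B)$ contain embeddings $h_I^0,h_I^1\colon A\to B$ with $\chi(h_I\circ h_I^j)=k_j$. Now for each $n$ index $2^n$ such copies by subsets $I\subseteq n$, set $x_I=h_I\circ h_I^1$, and for each $i<n$ define a partial isomorphism $\tilde g_i$ of $\F$ that on $h_I h_I^1(A)$ equals $h_Ih_I^0(h_Ih_I^1)^{-1}$ when $i\in I$ and the identity when $i\notin I$. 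The absence of relations between the blocks is exactly what makes $\tilde g_i$ a well-defined partial isomorphism; extend it to $g_i\in\Aut(\F)$ by ultrahomogeneity. Then $\chi(g_ix_I)=k_0$ if $i\in I$ and $k_1$ otherwise, so any $f$ with $\|f-\chi\|_\infty<\varepsilon$ fails the nullness criterion for suitable $r,s$ between $k_0$ and $k_1$. This is the heart of the argument, and nothing resembling it appears in your proposal: your ``copies of $\mathbf A$ spread across several blocks'' and ``pigeonhole/averaging over blocks'' do not produce the automorphisms $g_i$, which are the objects Proposition~\ref{prop: Ibarlucia nullness} asks for.
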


Since tameness is a strictly stronger property than nullness, the analogous result for tame flows requires stronger hypotheses:

\begin{thm}\label{thm: main thm2}
    Let $\F$ be a \Fraisse structure such that $\Age(\F)$ has the free amalgamation property. Any tame flow $\Aut(\F)\curvearrowright X$ has a fixed point. 
\end{thm}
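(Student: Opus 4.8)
The plan is to deduce Theorem~\ref{thm: main thm2} from Nguyen van Th\'{e}'s correspondence (Theorem~\ref{thm: NVT}) exactly as Theorem~\ref{thm: main thm1} is obtained, only with ``null'' replaced by ``tame''. One first isolates the family $\mathcal{T}$ of colourings $c\colon\emb(A,\F)\to[0,1]$ (for finite $A\in\Age(\F)$) whose associated right-uniformly-continuous function $\bar c$ on $\Aut(\F)$ generates a \emph{tame} subflow of $[0,1]^{\Aut(\F)}$; by the independence characterisation recalled in Definition~\ref{defn: tame system} these are the $c$ for which, for no pair of thresholds $s<t$, there is an infinite \emph{independent family} of copies of $A$, i.e.\ copies $\alpha_1,\alpha_2,\dots\in\binom{\F}{A}$ such that for every partition $P\sqcup Q$ of an initial segment of $\N$ there is $\phi\in\Aut(\F)$ with $c(\phi\alpha_i)\le s$ for $i\in P$ and $c(\phi\alpha_i)\ge t$ for $i\in Q$. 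One then checks, in parallel with the null case and using the closure properties of $\Tame(\Aut(\F))$, that $\mathcal{T}$ is the family matching the class of tame flows in the sense of Theorem~\ref{thm: NVT}. That theorem then reduces the statement to the \emph{approximate tame Ramsey property} of $\Age(\F)$: for all $A\le B$ in $\Age(\F)$, all $\eps>0$ and all $c\in\mathcal{T}$ defined on $\emb(A,\F)$, there is a copy $\tilde B\in\binom{\F}{B}$ with $\osc\bigl(c\restriction\binom{\tilde B}{A}\bigr)<\eps$.

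I would prove this last property by contraposition: assuming $c\colon\emb(A,\F)\to[0,1]$ satisfies $\osc\bigl(c\restriction\binom{\tilde B}{A}\bigr)\ge\eps_0$ for \emph{every} copy $\tilde B$ of $B$ in $\F$, I would exhibit an infinite independent family for $c$, so $c\notin\mathcal{T}$. This goes in three moves. First, a \emph{jump lemma}: for every finite $E\subseteq\F$ there is a copy of $B$ placed freely over $E$ (free joint embedding, which follows from free amalgamation), hence there are copies $\alpha,\alpha'$ of $A$ inside it, freely placed over $E$, with $c(\alpha)-c(\alpha')\ge\eps_0$. Iterating, one gets pairs $(\alpha_n,\alpha_n')_{n\in\N}$ with each $\{\alpha_n,\alpha_n'\}$ placed freely over $\bigcup_{m<n}\{\alpha_m,\alpha_m'\}$ and $c(\alpha_n)-c(\alpha_n')\ge\eps_0$. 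Second, a stabilisation step: passing to a subsequence (which preserves free placement) one may assume $c(\alpha_n)\to u$, $c(\alpha_n')\to v$ with $u-v\ge\eps_0$, and that all the marked two-element configurations $\langle\alpha_n\cup\alpha_n'\rangle$ have a single isomorphism type $\tau$. Fix $s<t$ strictly between $v$ and $u$ and truncate so that $c(\alpha_n')<s<t<c(\alpha_n)$ for every $n$.

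The decisive third move is to show that $(\alpha_n)_n$ is itself $(s,t)$-independent. Given disjoint finite $P,Q\subseteq\N$, it suffices (since $c(\alpha_n)\ge t$ and $c(\alpha_n')\le s$ already) to produce $\phi\in\Aut(\F)$ with $\phi\alpha_n=\alpha_n$ for $n\in Q$ and $\phi\alpha_n=\alpha_n'$ for $n\in P$. Such $\phi$ is assembled from a partial isomorphism $\rho$ defined piecewise on the free amalgam $\bigoplus_{n\in P\cup Q}\langle\alpha_n\cup\alpha_n'\rangle$: the identity on the $Q$-pieces, and on each $P$-piece the type-$\tau$ isomorphism $\alpha_n\mapsto\alpha_n'$, $\alpha_n'\mapsto\beta_n$, where $\beta_n$ is a fresh copy of $A$ chosen so that $(\alpha_n',\beta_n)$ realises $\tau$ \emph{and} $\beta_n$ is placed freely over everything previously used. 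Existence of each $\beta_n$ — and the fact that the assembled $\rho$ is a genuine partial isomorphism of $\F$, hence extends to an automorphism by ultrahomogeneity — is exactly where the \emph{free amalgamation property} enters: one amalgamates the type-$\tau$ extension of $\langle\alpha_n'\rangle$ with the configuration built so far \emph{over the common copy $\langle\alpha_n'\rangle$}, freely, and applies the Fra\"{i}ss\'{e} property. This also clarifies why free joint embedding (amalgamation over the empty structure) is enough for the null case of Theorem~\ref{thm: main thm1} but not here: the partial isomorphisms realising the required patterns must be continued over nonempty bases, so one needs amalgamation over those bases to keep everything freely placed.

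The main obstacle I anticipate is making this third move airtight: one must verify that the $Q$-pieces together with the auxiliary copies $\beta_n$ ($n\in P$) can be realised simultaneously inside $\F$ with no new relations among them, so that $\rho$ is a single partial isomorphism on the whole amalgam — this should be one application of the free amalgamation property with the pieces glued over their respective marked copies, but it requires the set-up to be arranged so that ``placed freely over'' is transitive and stable under the repeated passages to subsequences used in the first two moves. A secondary and more routine task is confirming that $\mathcal{T}$ is precisely the family Theorem~\ref{thm: NVT} attaches to the class of tame flows; this is the same bookkeeping as in the proof of Theorem~\ref{thm: main thm1}, carried out with the independence characterisation of tameness in place of that of nullness.
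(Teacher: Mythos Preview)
Your overall strategy — reduce to an approximate Ramsey property via Nguyen Van Th\'{e}'s theorem and prove that property by contraposition — matches the paper's. However, there is a significant gap, and your approach diverges from the paper's in a way that makes the gap hard to close.

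The issue is your characterisation of tame colourings. You describe $c$ as non-tame when there is a fixed infinite sequence of \emph{copies} $(\alpha_i)$ such that every finite $0/1$ pattern on the $\alpha_i$'s is realised by \emph{some} automorphism $\phi$. But the characterisation actually established in the paper (Proposition~\ref{prop: tameness} and Corollary~\ref{cor: not null/tame colouring}(2)) is dual: one needs a fixed infinite sequence of \emph{automorphisms} $(g_i)_{i<\omega}$ such that every finite pattern on the $g_i$'s is realised by some copy $h_I$. These are not obviously equivalent. Your condition says the set system $\{\{\alpha:c(\phi\alpha)\ge t\}:\phi\in G\}$ on $\binom{\F}{A}$ shatters an infinite set, while non-tameness says the \emph{dual} system on $G$ does. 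Infinite VC dimension is self-dual, but the existence of a single infinite shattered set is not in general, so your condition does not directly yield an infinite independence set in $G$, hence no $IT$-pair as required by Definition~\ref{defn: tame system}. Your claim that this ``follows from Definition~\ref{defn: tame system}'' is exactly the step that needs an argument.

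This is not a bookkeeping issue, because the duality is precisely what the paper's extra hypotheses are designed to handle. In the paper, each $g_i$ in the fixed sequence must interact correctly with $x_I$ for \emph{every} finite $I$ simultaneously, so each $g_i$ must be prescribed on an infinite substructure of $\F$ before being extended. This is why the paper first proves the result under the hypotheses Henson $+$ Hrushovski (EPPA) $+$ free joint embedding (Theorem~\ref{thm: Henson+Hrushovski+FJEP implies tame Ramsey property}): EPPA upgrades the finite partial isomorphisms on each $B$-copy to automorphisms of an EPPA witness $C$, so that they glue to an automorphism of the infinite free union $\bigsqcup_i C_i$, and the Henson property then extends that automorphism to all of $\F$. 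The free-amalgamation case is deduced by citing that free amalgamation implies Henson (Remark~\ref{rem: FA implies Henson}) and EPPA (Siniora--Solecki). Your third move, by contrast, only ever extends \emph{finite} partial isomorphisms (one $\phi$ per finite partition $P\sqcup Q$), which is why ultrahomogeneity and free amalgamation alone suffice for you --- but this only delivers your dual condition, not the infinite independence set in $G$ that non-tameness requires. If you can prove that your dual characterisation really is equivalent to non-tameness in this setting, your route would be more direct than the paper's and would bypass both Henson and EPPA; otherwise you will need those properties after all, and your third move has to be reorganised so that a \emph{single} infinite family $(g_i)$ is built in advance.
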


Let us mention that a special case of the above results was already established by Glasner and Megrelishvili, although through completely different methods. When $\F$ is a countable set, that is the \Fraisse limit of the free amalgamation class of finite sets, $\Aut(\F)=S_\infty$, the group of permutations of the natural numbers. It is shown in \cite[Theorem 8.4]{Glasner4} that every tame flow $S_\infty\curvearrowright X$ has a fixed point or equivalently that $M_t(S_\infty)$ (see Definition \ref{defn: M_n(G) and M_t(G)}) is a singleton. Although not stated explicitely there the techniques of \cite{Glasner4} also apply to $\Aut(M)$, where $M$ is the random graph, another standard example of a \Fraisse structure arising from a free amalgamation class. 

\section*{acknowledgements}
Most of the content of this paper first appeared in chapter 5 of the author's PhD thesis, but Theorem \ref{thm: main thm1} and Theorem \ref{thm: main thm2} were only established for actions on zero-dimensional spaces. This is because free joint embedding classes in one case, and free amalgamation classes in the other case, were only shown to satisfy an exact Ramsey property, instead of its approximate version (see Theorem \ref{thm: NVT} for more context on how the zero-dimensionality assumption is related to the difference between an exact and an approximate Ramsey property). We are indebted to Tom\'{a}s Ibarluc\'{i}a who, as a referee for the author's thesis, suggested that the arguments used in the thesis to verify the exact Ramsey properties could in fact be used to verify an approximate version of the same properties, allowing us to establish Theorem \ref{thm: main thm1} and Theorem \ref{thm: main thm2} as stated here, for actions on a compact space of any dimension.

\section{Definitions and background}\label{section: defns and background}

\subsection{\Fraisse theory}
Let $\L=\{R_i\}_{i\in I}$ be a relational language, where $R_i$ has arity $n_i$ for every $i$. A countable $\L$-structure $M$ is called \emph{ultrahomogeneous} if whenever $A,B\subseteq M$ are finite substructures and $f\colon A\to B$ is an isomorphism, it is possible to extend $f$ to an automorphism of $M$. A countable, ultrahomogeneous structure is called a \emph{\Fraisse structure}. Given a \Fraisse structure $M$ we let $\Age(M)$ denote the class of finite structures that embed into $M$. Using the ultrahomogeneity of $M$, it is easy to check that $\Age(M)$ satisfies the following properties:

\begin{enumerate}
	\item $\Age(M)$ contains countably many structures, up to isomorphism.
	\item If $A\in\Age(M)$ and $B\subseteq A$, then $B\in\Age(M)$, this is known as the \emph{hereditary property}.
	\item If $A,B\in\Age(M)$, then there exists $C\in\Age(M)$ into which both $A$ and $B$ can be embedded, this is known as the \emph{joint embedding property}
	\item If $A,B,C\in\Age(M)$ and $f_1\colon A\to B$, $f_2\colon A\to C$ are two embeddings, then there exist $D\in\Age(M)$ and embeddings $g_1\colon B\to D$ and $g_2\colon C\to D$ such that $g_1\circ f_1=g_2\circ f_2$, this is known as the \emph{amalgamation property}.
\end{enumerate}

A class of finite structures satisfying the four properties above is called a \emph{\Fraisse class} and, by a classical result due to \Fraisse, there is a one-to-one correspondence between \Fraisse classes and \Fraisse structures. Given a \Fraisse class $\mathcal F$, the (unique up to isomorphism) \Fraisse structure $M$ with $\Age(M)=\mathcal F$ is called the \emph{\Fraisse limit of $\mathcal F$}. The \Fraisse limit of $\mathcal F$ is also characterized as the unique structure $M$ with the following extension property: for every $A,B\in\Age(\mathcal F)$ and every pair of embeddings $f_1\colon A\to M$, $f_2\colon A\to B$, there exists an embedding $g\colon B\to M$ with $g\circ f_2=f_1$.

Various strengthenings of the properties above have been considered, the following two will be particularly relevant for our purposes.

\begin{defn}
	Let $\F$ be a \Fraisse structure in the language $\L$. Given $A,B\in\Age(\F)$ let $A\sqcup B$ be the structure whose underlying set is the disjoint union of $A$ and $B$, and such that there are no new relations between the elements of $A$ and those of $B$. Explicitly $R_i^{A\sqcup B}(c_1,\ldots,c_{n_i})$ holds in $A\sqcup B$ if and only if either $c_1,\ldots,c_{n_i}\in A$ and $R_i^A(c_1,\ldots,c_{n_i})$ holds in $A$, or $c_1,\ldots,c_{n_i}\in B$ and $R_i^B(c_1,\ldots,c_{n_i})$ holds in $B$. We say that $\Age(\F)$ has the \emph{free joint embedding property} if for every $A,B\in\Age(\F)$, $A\sqcup B$ also belongs to $\Age(\F)$.
\end{defn}

Clearly the free joint embedding property implies the joint embedding property. Given $(A_i)_{i\in\omega}\subseteq\Age(\F)$, the structure $A=\bigsqcup_i A_i$ can be defined analogously. While it won't be an element of $\Age(\F)$, being infinite, it can embedded in $\F$ as long as $\Age(\F)$ has the free joint embedding property, since in that case $\Age(A)\subseteq\Age(\F)$.

\begin{defn}
	Let $\F$ be a \Fraisse structure. We say that $\Age(\F)$ has the \emph{free amalgamation property} if for all $A,B,C\in\Age(\F)$ and all pairs of embedding $f_1\colon A\to B$, $f_2\colon A\to C$, the structure $B\sqcup C/\sim$ is also in $\Age(\F)$, where $b\sim c$ if and only if there exists $a\in A$ with $b=f_1(a)$ and $c=f_2(a)$. 
\end{defn}

Note that the free amalgamation property implies the amalgamation property and that, since we're working with relation languages, the free amalgamation property implies the free joint embedding property (by taking $A$ to be the empty structure structure). As is standard we will also say that $\F$ has one of the properties defined above to mean that $\Age(\F)$ has the property in question.

\subsection{Subalgebras and compactifications}\label{subsection: subalgebras and compactifications}

Let $G$ be a topological group. Note that $G$-flows are always assumed to be compact, so we will say that $X$ is a $G$\emph{-space} if it is a not necessarily compact space $X$ with a continuous $G$-action. For clarity we will still often explicitly point out when a $G$-space is not assumed to be compact. A \emph{$G$-ambit $(X,x)$} is a $G$-flow $X$ together with a distinguished point $x$ with dense orbit. Given a family of $G$-ambits $(X_\lambda,x_\lambda)_{\lambda\in\Lambda}$ their \emph{supremum} $\bigvee_{\lambda\in\Lambda} (X_\lambda,x_\lambda)$ is the subflow of $\prod_{\lambda\in\Lambda}X_\lambda$ induced on the orbit closure of $(x_\lambda)_{\lambda\in\Lambda}$, together with the distinguished point $(x_\lambda)_{\lambda\in\Lambda}$.

\noindent Let $X$ be a (not necessarily compact) $G$-space. An embedding $\nu\colon X\to Y$ with dense image into a compact space $Y$ is called a \emph{$G$-compactification of $X$} if the action $G\curvearrowright X$ extends to an action $G\curvearrowright Y$. We say that a function $f\in C(X)$ \emph{comes from a $G$-compactification} $\nu\colon X\to Y$ if there is $\tilde f\in C(Y)$ such that $f=\tilde f\circ\nu$ (note that, if it exists, $\tilde f$ is unique). Given a $G$-space $X$, a bounded continuous function $f\colon X\to\mathbb R$ is called \emph{right uniformly continuous} if $$\forall\varepsilon>0\,\exists U\in\mathcal N_G\,\forall g\in U\,\forall x\in X\left(|f(g^{-1}x)-f(x)|<\varepsilon\right),$$ where $\mathcal N_G$ denotes the set of neighbourhoods of the identity in $G$. The collection of bounded right uniformly continuous functions on $X$ equipped with the $\|\cdot\|_\infty$-norm forms a normed algebra denoted by $\RUC_b(X)$. There is a correspondence between $G$-compactifications of $X$ and uniformly closed $G$-invariant subalgebras of $\RUC_b(X)$ that contain the constant functions. The subalgebra $\mathcal A_\nu\subseteq\RUC_b(X)$ associated to a $G$-compactification $\nu\colon X\to Y$ is $$\A_\nu=\{f\in C(X)\mid f\text{ comes from $\nu$}\}.$$
	Conversely given such an algebra $\A\subseteq\RUC_b(X)$ we obtain a $G$-compactification $X^\A$ of $X$ by letting $$X^\A=\{f\colon\A\to\Bbb R\mid f\text{ is a linear functional}\},$$ equipped with the weak* topology. Identifying linear functionals and maximal ideals of $\RUC_b(X)$ we have a canonical embedding $\nu_\A\colon X\to X^\A$ given by $x\mapsto p_x$, where $p_x$ is the maximal ideal defined by $p_x=\{f\in X^\A\mid f(x)=0\}.$ In this description basic open sets of $X^\A$ are of the form $$U_{f,\delta}=\{p\in X^\A\mid |\tilde f(p)|<\delta\},$$ where $f$ is a linear functional $\mathcal A\to\mathbb R$, $\delta>0$ and $\tilde f(p)$ is the unique $r\in\Bbb R$ such that $f-r\in p$. Note that if $f\colon X\to\mathbb R$ is continuous, then $\tilde f(p_x)=f(x)$, so that $\tilde f$ is the extension of $f$ to the $G$-compactification $X^\A$. 
	
	\noindent If $f$ comes from a $G$-compactification of $X$ then clearly $f\in\RUC_b(X)$, and the converse is true. Indeed for any $f\in\RUC_b(X)$ there is a minimal $G$-compactification of $X$ that $f$ comes from, called the cyclic $G$-space of $f$ and denoted by $X_f$. This corresponds to $X^{\langle f\rangle}$, the $G$-compactification associated to the closed (in the topology induced by the $\|\cdot\|_\infty$-norm) subalgebra of $\RUC_b(X)$ generated by $f$. Moreover if $\A$ is any closed $G$-invariant subalgebra of $\RUC_b(X)$ that contains the constant functions and $f$, the flow $X^\A$ has $X_f$ as a factor. A reference for the facts just mentioned is \cite[IV, §5]{deVries}, of particular relevance are Theorem 5.13 and Theorem 5.18.
	
	There is a continuous left action of $G$ on $\RUC_b(G)$ given by $(g\cdot f)(h)=f(g^{-1}h)$, where $\RUC_b(G)$ is equipped with the norm topology. There is also a right action of $G$ given by $(g\bullet f)(h)=f(hg)$. When $\RUC_b(G)$ is equipped with the pointwise convergence topology, then this action is continuous on $\overline{G\bullet f}$ (closure in the pointwise convergence topology) for every $f\in\RUC_b(X)$, even though it can fail to be continuous on the whole of $\RUC_b(G)$. To summarize to every $f\in\RUC_b(G)$ we have associated two ambits: $(G^{\langle f\rangle},1_G)$ and $(\overline{G\bullet f},f)$ which are isomorphic, as proved by Nguyen Van Thé in \cite[Proposition 3.9]{NVT}.

 	\subsection{Null and tame functions}

	\noindent The following characterization of null and tame functions is due to David Kerr and Hanfeng Li \cite{KerrLi}. 
	
	\begin{defn}
		Let $A_1,\ldots,A_k\subseteq X$. An \emph{independence set} for $\{A_1,\ldots,A_k\}$ is a set $I\subseteq G$ such that for all finite $J\subseteq I$ and all functions $\sigma\colon J\to k$ we have $$\bigcap_{j\in J}j^{-1}A_{\sigma(j)}\neq\varnothing.$$
	\end{defn}
	
	\begin{defn}
		A tuple $x=(x_1,\ldots,x_k)\in X^k$ is an \emph{$IN$-tuple} (\emph{$IN$-pair} in the case $k=2$) if for every product neighbourhood $U_1\times U_2\times\cdots\times U_k$ of $x$ and every $n\in\mathbb N$, the tuple $\{U_1,\ldots,U_k\}$ has an independence set of size at least $n$.
	\end{defn}
	
	\begin{prop}[\protect{\cite[Proposition 5.4(i)]{KerrLi}}]
		If $\{A_1,\ldots,A_k\}$ is a tuple of closed sets with arbitrarily large finite independence sets, then $A_1\times\cdots\times A_k$ contains an $IN$-tuple.
	\end{prop}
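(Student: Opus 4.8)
Set $K:=A_1\times\cdots\times A_k\subseteq X^k$, a compact set (we may assume $K\neq\varnothing$, since otherwise $\{A_1,\dots,A_k\}$ has no independence set of positive size and the hypothesis fails). For a $k$-tuple $\bar{U}=(U_1,\dots,U_k)$ of subsets of $X$ let $\varphi(\bar{U})\in\mathbb{N}\cup\{\infty\}$ be the supremum of the sizes of finite independence sets for $\{U_1,\dots,U_k\}$; note that $\varphi$ is monotone, since enlarging the $U_i$ can only help and any subset of an independence set is an independence set. By the definition recalled above, a point $\bar{x}=(x_1,\dots,x_k)\in K$ is an $IN$-tuple exactly when $\varphi(U_1,\dots,U_k)=\infty$ for every product neighbourhood $U_1\times\cdots\times U_k$ of $\bar{x}$, and the hypothesis is that $\varphi(A_1,\dots,A_k)=\infty$.

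The plan is to argue by contradiction. If $K$ contained no $IN$-tuple, then every $\bar{a}\in K$ would have a product neighbourhood $\bar{U}^{\bar{a}}$ and an integer $n_{\bar{a}}$ with $\varphi(\bar{U}^{\bar{a}})<n_{\bar{a}}$; compactness of $K$ gives finitely many $\bar{U}^{\bar{a}_1},\dots,\bar{U}^{\bar{a}_r}$ covering $K$, and with $n:=\max_{j}n_{\bar{a}_j}$ every one of these boxes has $\varphi<n$. The next step is to replace this by a \emph{grid} cover: choose finite open covers $A_i=\bigcup_{t\in T_i}V_i^{\,t}$ $(1\le i\le k)$ fine enough that every grid box $V_1^{\,t_1}\times\cdots\times V_k^{\,t_k}$ which meets $K$ lies inside some $\bar{U}^{\bar{a}_j}$, and hence satisfies $\varphi(V_1^{\,t_1},\dots,V_k^{\,t_k})<n$. (For metrizable $X$ this is the Lebesgue covering lemma applied to the compact set $K$ and the open sets $\bar{U}^{\bar{a}_j}$; in general it is the standard tube-lemma refinement of a finite cover of a product of compacta by open boxes.) It then suffices to contradict the existence of such a grid cover, using only that $\{A_1,\dots,A_k\}$ has arbitrarily large finite independence sets.

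To that end, fix a finite independence set $I$ for $\{A_1,\dots,A_k\}$ of some size $N$, to be chosen large. For each $\sigma\colon I\to\{1,\dots,k\}$ choose $z_\sigma\in\bigcap_{g\in I}g^{-1}A_{\sigma(g)}$ and, for each $g\in I$, an index $t(g,\sigma)\in T_{\sigma(g)}$ with $gz_\sigma\in V_{\sigma(g)}^{\,t(g,\sigma)}$. The map $\sigma\mapsto\big(\sigma(g),t(g,\sigma)\big)_{g\in I}$ is injective, so its image $R\subseteq\big(\coprod_{i=1}^{k}(\{i\}\times T_i)\big)^{I}$ has exactly $k^{N}$ elements and its coordinatewise projection onto block indices is all of $\{1,\dots,k\}^{I}$. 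The crucial ingredient is now a purely finitary counting lemma of Sauer--Shelah/VC type: if $N$ is large enough in terms of $n$, $k$ and $\max_i|T_i|$, then $R$ contains a \emph{transversal subcube}, i.e.\ there are an $n$-element set $I'\subseteq I$ and indices $t_i\in T_i$ $(1\le i\le k)$ such that for every $\tau\colon I'\to\{1,\dots,k\}$ the function $g\mapsto\big(\tau(g),t_{\tau(g)}\big)$ on $I'$ is the restriction to $I'$ of a member of $R$. Unravelling the definition of $R$, this means that for each $\tau\colon I'\to\{1,\dots,k\}$ there is $\sigma$ extending $\tau$ with $gz_\sigma\in V_{\tau(g)}^{\,t_{\tau(g)}}$ for all $g\in I'$; hence $I'$ is an independence set of size $n$ for $\{V_1^{\,t_1},\dots,V_k^{\,t_k}\}$, and, taking $\tau$ constant, each $V_i^{\,t_i}$ meets $A_i$, so this grid box meets $K$. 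This contradicts $\varphi(V_1^{\,t_1},\dots,V_k^{\,t_k})<n$, and so $K$ must contain an $IN$-tuple.

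The one genuine obstacle is the combinatorial lemma used in the previous paragraph; the rest is compactness and a rewriting of definitions. I would isolate it as a statement about a set $R$ of words over a finite alphabet split into $k$ blocks whose block-pattern projection is surjective onto all $k^{|I|}$ patterns, and prove it by the shifting/double-counting argument behind the Sauer--Shelah lemma, applied one block (colour) at a time; such counting lemmas are exactly the combinatorial core of \cite{KerrLi}. It is precisely at this point that the full strength of the hypothesis is used: we need an independence set $I$ of arbitrarily prescribed finite size, not merely of positive size.
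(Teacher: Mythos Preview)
The paper does not give its own proof of this proposition; it is quoted verbatim as \cite[Proposition 5.4(i)]{KerrLi} and used as a black box, so there is nothing in the paper to compare your argument against.

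That said, your outline is a faithful reconstruction of the Kerr--Li proof: reduce by compactness to a finite cover of $K=A_1\times\cdots\times A_k$ by product boxes each with bounded independence number, refine to a grid cover of the $A_i$, and then use a Sauer--Shelah/Karpovsky--Milman type counting lemma to show that a sufficiently large independence set for $\{A_1,\dots,A_k\}$ forces some single grid box $\{V_1^{t_1},\dots,V_k^{t_k}\}$ to have an independence set of size $n$. You are right that the combinatorial lemma is the only substantive step and that everything else is compactness plus bookkeeping. One small remark: the version of the lemma you state, with the indices $t_i$ independent of the coordinate $g\in I'$, is slightly stronger than what the raw shifting argument gives (which produces possibly $g$-dependent shattering sets $\{(1,t_{g,1}),\dots,(k,t_{g,k})\}$); one recovers your uniform version by a further pigeonhole on the finitely many possible tuples $(t_{g,1},\dots,t_{g,k})$, at the cost of enlarging $N$. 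This is harmless and is how Kerr--Li proceed as well.
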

	
	\noindent We take the following as our definition of a null function. It is equivalent to other, more common, characterizations by \cite[Proposition 5.9]{KerrLi}.
	\begin{defn}\label{defn: null system}
		Let $X$ be a compact $G$-space. A function $f\in\RUC_b(X)$ is \emph{not null} if and only if there is an $IN$-pair $(x,y)\in X\setminus\Delta_X$ such that $f(x)\neq f(y)$. A dynamical system $(X,G)$ is called \emph{null} if every $f\in\RUC_b(X)$ is null.
	\end{defn}
	
	\noindent Consider now an arbitrary $G$-space $X$, not necessarily compact. We say that $f\colon X\to\Bbb R$ is \emph{null} if and only if there is a null $G$-compactification $\nu\colon X\to\tilde X$ that $f$ comes from.
	
	\begin{remark} As already observed before Fact 3.6 in \cite{Ibarlucia} it follows from the fact that factors of null flows are null \cite[Corollary 5.5]{KerrLi} that $f$ is null if and only if $X_f$ is null. Indeed if $\nu\colon X\to Y$ is any null $G$-compactification that $f$ comes from, then it has $X_f$ as a factor, so that the latter must be null.
	\end{remark}
	\begin{prop}[\protect{\cite[Fact 3.6]{Ibarlucia}}]\label{prop: Ibarlucia nullness}
		A function $f\in\RUC(X)$ is null if and only if there are no $r,s\in\Bbb R$ with $r<s$ such that for every $n\in\Bbb N$ one can find $(g_i)_{i<n}\subseteq G$ and $(x_I)_{I\subseteq n}\subseteq X$ such that 
        \begin{align*}
            f(g_ix_I)&<r \quad\text{if $i\in I$} \\
            f(g_ix_I)&>s \quad\text{if $i\not\in I$}.
        \end{align*}
	\end{prop}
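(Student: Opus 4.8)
The plan is to unwind the definition of nullness through the cyclic $G$-space $X_f$, reduce to the compact case, and then translate by hand between the displayed combinatorial configuration and the existence of an $IN$-pair for the canonical continuous extension $\tilde f\in C(X_f)$ of $f$. First I would record the reduction. By the remark preceding the statement, $f$ is null if and only if the flow $X_f$ is null. Since $X_f$ is compact, $\RUC_b(X_f)=C(X_f)$ (every continuous function on a compact $G$-space comes from a $G$-compactification, namely the identity), and by the de Vries correspondence recalled above $C(X_f)$ is the uniformly closed unital algebra generated by $\tilde f$. One checks directly that the null functions on a compact $G$-space form a uniformly closed unital subalgebra: an $IN$-pair separated by $h_1+h_2$, by $h_1h_2$, or by a uniform limit $\lim_n h_n$ is already separated by $h_1$ or $h_2$, respectively by some $h_n$. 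Hence $X_f$ is null if and only if $\tilde f$ itself is null in the sense of Definition~\ref{defn: null system}, i.e.\ if and only if there is \emph{no} $IN$-pair $(x,y)$, off the diagonal, with $\tilde f(x)\ne\tilde f(y)$. Throughout write $\nu\colon X\to X_f$ for the canonical dense $G$-equivariant embedding, so that $\tilde f\circ\nu=f$ and $\nu(gx)=g\nu(x)$.

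For the direction ``no configuration $\Rightarrow$ null'' (in contrapositive form), suppose $\tilde f$ is not null, witnessed by an $IN$-pair $(x,y)$ with, say, $\tilde f(x)<\tilde f(y)$ (otherwise interchange the two colours used below). Choose reals $\tilde f(x)<r<s<\tilde f(y)$ and set $U_1=\{z\in X_f:\tilde f(z)<r\}$ and $U_2=\{z\in X_f:\tilde f(z)>s\}$, open neighbourhoods of $x$ and $y$. Fix $n\in\N$. The $IN$-pair property yields an independence set $\{g_0,\dots,g_{n-1}\}$ of size $n$ for $\{U_1,U_2\}$. For each $I\subseteq n$, the colouring sending $g_i$ to $1$ when $i\in I$ and to $2$ when $i\notin I$ shows that $\bigcap_{i\in I}g_i^{-1}U_1\cap\bigcap_{i\notin I}g_i^{-1}U_2$ is a nonempty \emph{open} subset of $X_f$; by density of $\nu(X)$ it contains $\nu(x_I)$ for some $x_I\in X$, and then $\tilde f\circ\nu=f$ together with equivariance gives $f(g_ix_I)<r$ for $i\in I$ and $f(g_ix_I)>s$ for $i\notin I$. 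Since $r,s$ do not depend on $n$, this is precisely the configuration demanded.

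For the converse, suppose such $r<s$ exist, and set $A_1=\{z\in X_f:\tilde f(z)\le r\}$ and $A_2=\{z\in X_f:\tilde f(z)\ge s\}$, both closed. Fix $n$ and take $(g_i)_{i<n}$ and $(x_I)_{I\subseteq n}$ as in the statement. The $g_i$ are pairwise distinct: if $g_i=g_j$ with $i\ne j$, then applying them to $x_{\{i\}}$ would force $f(g_ix_{\{i\}})<r<s<f(g_jx_{\{i\}})=f(g_ix_{\{i\}})$. For any colouring $\sigma\colon\{g_0,\dots,g_{n-1}\}\to\{1,2\}$, put $I=\{i:\sigma(g_i)=1\}$; then $g_i\nu(x_I)=\nu(g_ix_I)$ has $\tilde f$-value below $r$ for $i\in I$ and above $s$ for $i\notin I$, so $\nu(x_I)\in\bigcap_{i<n}g_i^{-1}A_{\sigma(g_i)}$, and restricting $\sigma$ to a subset of $\{g_0,\dots,g_{n-1}\}$ only enlarges this intersection. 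Thus $\{g_0,\dots,g_{n-1}\}$ is an independence set of size $n$ for $\{A_1,A_2\}$, so this pair of closed sets has arbitrarily large finite independence sets. By \cite[Proposition~5.4(i)]{KerrLi}, $A_1\times A_2$ contains an $IN$-pair $(x,y)$; since $\tilde f(x)\le r<s\le\tilde f(y)$, this $IN$-pair lies off the diagonal and is separated by $\tilde f$, so $\tilde f$, hence $X_f$, hence $f$, is not null.

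I do not anticipate a single hard step: the real work is in the two-way passage between $X$ and its compactification $X_f$. Going in, one must see that the combinatorial inequalities give honest membership in the \emph{closed} sets $A_i\subseteq X_f$ and that the $g_i$ are automatically distinct — this is exactly where the hypothesis $r<s$ is used. Coming back out, one needs the density of $\nu(X)$ in $X_f$ to realise an $IN$-witness by genuine points of $X$. Once the reduction to $\tilde f$ on the compact space $X_f$ is set up, the Kerr--Li notion of an $IN$-pair together with \cite[Proposition~5.4(i)]{KerrLi} supplies the bridge between ``arbitrarily large finite independence sets'' and ``$IN$-pair'', and nothing further is required.
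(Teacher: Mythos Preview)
The paper does not prove this proposition; it is stated with a bare citation to \cite[Fact 3.6]{Ibarlucia}. The nearest comparator in the paper is the proof of the tame analogue, Proposition~\ref{prop: tameness}, which the paper does spell out and explicitly describes as ``very similar'' to Ibarluc\'ia's argument. Your proof matches that one step for step: pass to the cyclic compactification $X_f$, translate the combinatorial configuration into the statement that the closed sublevel and superlevel sets of $\tilde f$ admit arbitrarily large finite independence sets, invoke \cite[Proposition~5.4(i)]{KerrLi} to produce an $IN$-pair separated by $\tilde f$, and conversely use an $IN$-pair to cut out open neighbourhoods whose independence intersections, being open and nonempty, meet the dense image of $X$. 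Your observation that the $g_i$ are automatically pairwise distinct, and your more explicit reduction from ``$X_f$ null'' to ``$\tilde f$ null'', are details the paper leaves implicit in the tame case.

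One small imprecision in that reduction: $C(X_f)$ is the closed unital \emph{$G$-invariant} algebra generated by $\tilde f$, not merely the closed unital algebra it generates on its own. To deduce from ``$\tilde f$ null'' that every element of $C(X_f)$ is null you therefore also need the class of null functions on $X_f$ to be closed under the $G$-action, not just under sums, products, and uniform limits. This is immediate once you note that $IN$-pairs are $G$-invariant: if $(x,y)$ is an $IN$-pair separated by $g\cdot\tilde f$, then $(g^{-1}x,g^{-1}y)$ is an $IN$-pair separated by $\tilde f$. With this one line added, your argument is complete and essentially coincides with the paper's treatment of the tame case.
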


	\noindent We have analogous results for tameness. 
	
	\begin{defn}
		A tuple $x=(x_1,\ldots,x_k)\in X^k$ is an $IT$-tuple ($IT$-pair in the case $k=2$) if for every product neighbourhood $U_1\times U_2\times\cdots\times U_k$ of $x$, the tuple $\{U_1,\ldots,U_k\}$ has an infinite independence set.
	\end{defn}
	
	\begin{prop}[\protect{\cite[Proposition 6.4(i)]{KerrLi}}]\label{prop: IT tuple from closed sets}
		If $\{A_1,\ldots,A_k\}$ is a tuple of closed sets with an infinite independence set, then $A_1\times\cdots\times A_k$ contains an $IT$-tuple.
	\end{prop}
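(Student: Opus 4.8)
The plan is to argue by contraposition and reduce, by a compactness argument, to a purely combinatorial statement about independence sets; that combinatorial statement is where essentially all the work lies. We may assume the given infinite independence set is countable, say $I = \{g_n : n \in \mathbb{N}\}$.

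The combinatorial input I would isolate is a \emph{localization of infinite independence}: if a tuple of closed sets $\{A_1, \ldots, A_k\}$ has an infinite independence set and $A_1 = A_1' \cup A_1''$ with $A_1', A_1''$ closed, then one of $\{A_1', A_2, \ldots, A_k\}$ and $\{A_1'', A_2, \ldots, A_k\}$ again has an infinite independence set. Iterating over the coordinates (and doing a little extra bookkeeping) this upgrades to: whenever $A_1 \times \cdots \times A_k$ is covered by finitely many product sets $\prod_{i} V_i^{t}$, $t = 1, \ldots, N$, with $V_i^t \subseteq A_i$, at least one of the tuples $\{V_1^t, \ldots, V_k^t\}$ has an infinite independence set. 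Granting this, the proposition follows: if $A_1 \times \cdots \times A_k$ contained no $IT$-tuple, then each point $p$ would have a product neighbourhood $\prod_i U_i^p$ with $\{U_1^p, \ldots, U_k^p\}$ admitting no infinite independence set; by compactness finitely many such neighbourhoods cover $A_1 \times \cdots \times A_k$, and the statement above applied to this cover produces a $t$ for which $\{U_1^{p_t} \cap A_1, \ldots, U_k^{p_t} \cap A_k\}$, hence $\{U_1^{p_t}, \ldots, U_k^{p_t}\}$, has an infinite independence set --- contradicting the choice of $\prod_i U_i^{p_t}$. (Here one uses the trivial fact that enlarging the members of a tuple cannot destroy an independence set.)

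The main obstacle is the localization statement, and its delicate point is keeping the independence set \emph{infinite}. For the parallel statement about $IN$-tuples, where only arbitrarily large \emph{finite} independence sets need be controlled, one pass of a Sauer--Shelah-type argument suffices: split $A_1$, and pass to a fixed positive fraction of a given finite independence set on which the choice of $A_1'$ versus $A_1''$ forced at the colour-$1$ positions has been made uniform. To retain an \emph{infinite} independence set a single pass is not enough, and one must fuse. The way I would do it: for each finite $F \subseteq I$ and colouring $\sigma \colon F \to \{1, \ldots, k\}$, fix a realizing point $y_{F, \sigma}$ (so $g\, y_{F, \sigma} \in A_{\sigma(g)}$ for all $g \in F$) and record, for each $g$ with $\sigma(g) = 1$, whether $g\, y_{F,\sigma} \in A_1'$ or $g\, y_{F,\sigma} \in A_1''$; then homogenise this record by an infinitary Ramsey/diagonalisation argument, passing to an infinite $I^* \subseteq I$ along which, for every colouring, the forced choice at every colour-$1$ position can be taken to be the same, say always $A_1'$. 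Closedness of $A_1'$ and compactness of $X$ then turn this ``finitely many positions at a time'' information into the statement that $I^*$ is an independence set for $\{A_1', A_2, \ldots, A_k\}$. Performing this fusion with enough care that $I^*$ remains infinite is the technical heart of the argument, and is exactly the content of the cited result of Kerr and Li.
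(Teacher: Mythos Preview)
The paper does not give its own proof of this proposition; it is simply quoted from Kerr--Li with a citation and no argument. So there is nothing in the paper to compare your attempt against line by line.

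That said, your outline is a faithful sketch of how the Kerr--Li argument actually goes: the contrapositive-plus-compactness reduction to a finite cover by product neighbourhoods is standard and correct, and the step you call ``localization of infinite independence'' (splitting one coordinate into two closed pieces and retaining an infinite independence set for one of them) is exactly the combinatorial core. Your remark that the delicate point is keeping the independence set \emph{infinite} after the split --- as opposed to the $IN$ case, where a density/Sauer--Shelah argument on finite sets suffices --- is precisely the right diagnosis. One small thing you gloss over: passing from the two-piece localization lemma to the statement about arbitrary finite covers by product sets needs a refinement step (pass to the atoms of the Boolean algebra generated by the cover in each coordinate, then iterate the two-piece lemma finitely many times); your phrase ``a little extra bookkeeping'' is accurate but hides this. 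Since you ultimately defer the localization lemma itself back to Kerr--Li, your write-up and the paper's treatment amount to the same thing: both rely on the cited source for the actual work.
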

	
	\noindent We take the following as our definition of a tame function. This is not how tameness is usually defined in the literature, but it is equivalent to more common definitions by \cite[Proposition 6.6]{KerrLi}
	\begin{defn}\label{defn: tame system}
		Let $X$ be a compact $G$-space. A function $f\in\RUC(X)$ is \emph{not tame} if and only if there is an $IT$-pair $(x,y)\in X\setminus\Delta_X$ such that $f(x)\neq f(y)$. A dynamical system $(X,G)$ is called \emph{tame} if every $f\in\RUC_b(X)$ is tame.
	\end{defn}

    \begin{example}
        Examples of null (hence also tame) flows include all continuous actions by homeomorphisms $G\curvearrowright X$ where $X$ is a dendrite \cite{GMtrees}. More concretely for any homeomorphism $\varphi\colon [0,1]\to[0,1]$ the induced $\mathbb Z$-action is null (hence tame). Examples of flows that are not null are given by actions with positive entropy, the classical example being the action $\Homeo(2^\omega)\curvearrowright 2^\omega$ of the homeomorphism group of the Cantor set on the Cantor set itself. It is more difficult to construct flows which are tame but not null, an example of such a $\mathbb Z$-flow is given in \cite[Example 6.7]{KerrLi}.
    \end{example}
	
	\noindent Consider now an arbitrary $G$-space $X$, not necessarily compact. We say that $f\colon X\to\Bbb R$ is tame if and only if there is a tame $G$-compactification $\nu\colon X\to\tilde X$ that $f$ comes from.
	
	\begin{remark}\label{remark: tameness can be checked on cyclic space}
		Analogously to the case of null functions, it follows from \cite[Corollary 6.5]{KerrLi}, which shows that factors of tame flows are tame, that $f\colon X\to\mathbb R$ is tame if and only if $G\curvearrowright X^{\langle f\rangle}$ is tame. Indeed if $\nu\colon X\to Y$ is any tame $G$-compactification that $f$ comes from, then it has $X_f$ as a factor, so that the latter must be tame.
	\end{remark}

	The following is a version of Proposition \ref{prop: Ibarlucia nullness} for tame functions.
	\begin{prop}\label{prop: tameness}
		Let $X$ be a (not necessarily compact) $G$-space. A function $f\in\RUC_b(X)$ is tame if and only if there are no $r,s\in\Bbb R$ with $r<s$ for which there exist $(g_i)_{i<\omega}\subseteq G$ and $(x_I)_{I\subseteq_{fin} \omega}\subseteq X$ such that 
        \begin{equation}\label{eq: tameness}
        \begin{aligned}
            f(g_ix_I)&<r \text{ if $i\in I$}\\
            f(g_ix_I)&>s \text{ if $i\not\in I$ and $i\leq \max I$,}
        \end{aligned}
        \end{equation}
        where $I\subseteq_{fin}\omega$ means that $I$ is a finite subset of $\omega$.
	\end{prop}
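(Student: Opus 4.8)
The plan is to reduce everything to the cyclic flow $X_f=X^{\langle f\rangle}$ and then run the argument of Proposition~\ref{prop: Ibarlucia nullness} with ``arbitrarily large finite independence set'' and ``$IN$-pair'' replaced by ``infinite independence set'' and ``$IT$-pair''. Recall from Remark~\ref{remark: tameness can be checked on cyclic space} that $f$ is tame iff the flow $G\curvearrowright X_f$ is tame; that the canonical map $\nu\colon X\to X_f$ is $G$-equivariant with dense image and satisfies $\tilde f\circ\nu=f$, where $\tilde f\in C(X_f)$; and that, since the identity is a $G$-compactification of the compact space $X_f$, every element of $C(X_f)$ comes from a $G$-compactification and hence lies in $\RUC_b(X_f)$ — in particular $\tilde f$ does, so Definition~\ref{defn: tame system} applies to it. Two elementary observations will be used repeatedly: the diagonal $G$-action preserves $IT$-pairs (if $I$ is an independence set for $\{U_1,U_2\}$ then $Ig^{-1}$ is one for $\{gU_1,gU_2\}$); and since $X_f=X^{\langle f\rangle}$ and $\langle f\rangle$ is generated, as a closed subalgebra, by the $G$-translates of $f$, any two distinct points of $X_f$ are separated by $p\mapsto\tilde f(hp)$ for some $h\in G$.

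\emph{If $f$ is not tame then \eqref{eq: tameness} holds for some $r<s$.} By Remark~\ref{remark: tameness can be checked on cyclic space}, $X_f$ is not a tame flow, so by Definition~\ref{defn: tame system} it carries an off-diagonal $IT$-pair; by the second observation above and translation, we may assume it is an $IT$-pair $(p,q)$ with $\tilde f(p)<\tilde f(q)$. Choose $r<s$ strictly between these values and put $U_1=\tilde f^{-1}((-\infty,r))\ni p$ and $U_2=\tilde f^{-1}((s,\infty))\ni q$. Enumerate an infinite independence set for $\{U_1,U_2\}$ as $g_0,g_1,\dots$. Given a nonempty finite $I\subseteq_{fin}\omega$ with $N=\max I$, apply independence to $\{g_0,\dots,g_N\}$ with the colouring $\sigma$ given by $\sigma(g_i)=1$ for $i\in I$ and $\sigma(g_i)=2$ otherwise: the set $\bigcap_{i\le N}g_i^{-1}U_{\sigma(g_i)}$ is open and nonempty in $X_f$, hence meets $\nu(X)$, and any $x_I\in X$ with $\nu(x_I)$ in it satisfies (using equivariance and $\tilde f\circ\nu=f$) that $f(g_ix_I)<r$ for $i\in I$ and $f(g_ix_I)>s$ for $i\notin I$ with $i\le N$. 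Taking $x_\varnothing$ arbitrary, $(g_i)$ and $(x_I)$ witness \eqref{eq: tameness}.

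\emph{If \eqref{eq: tameness} holds for some $r<s$ then $f$ is not tame.} Put $A_1=\tilde f^{-1}((-\infty,r])$ and $A_2=\tilde f^{-1}([s,\infty))$ in $X_f$; these are closed and disjoint. Taking $I=\{j\}$ in \eqref{eq: tameness} forces the $g_i$ to be pairwise distinct, so $\{g_i:i<\omega\}$ is infinite, and I claim it is an independence set for $\{A_1,A_2\}$. Indeed, given disjoint finite $F,F'\subseteq\omega$ (with $F$ assigned colour $1$ and $F'$ colour $2$), pick $M>\max(F\cup F')$ and set $I=F\cup\{M\}$; then \eqref{eq: tameness} gives $f(g_ix_I)<r$ for $i\in F$, and since each $i\in F'$ satisfies $i\notin I$ and $i\le\max I=M$ it also gives $f(g_ix_I)>s$ for $i\in F'$, so $\nu(x_I)\in\bigcap_{i\in F}g_i^{-1}A_1\cap\bigcap_{i\in F'}g_i^{-1}A_2$, which is therefore nonempty. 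By Proposition~\ref{prop: IT tuple from closed sets}, $A_1\times A_2$ contains an $IT$-pair $(x,y)$; it is off the diagonal because $A_1\cap A_2=\varnothing$, and $\tilde f(x)\le r<s\le\tilde f(y)$. Hence $\tilde f\in\RUC_b(X_f)$ is not tame by Definition~\ref{defn: tame system}, so $X_f$ is not a tame flow, and therefore $f$ is not tame.

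The only genuinely delicate points are the two transfers between $X$ and $X_f$: realising the nonempty open intersections computed in $X_f$ by honest points of $X$ via density of $\nu(X)$, and the index bookkeeping that matches the clause ``$i\le\max I$'' of \eqref{eq: tameness} with evaluating independence on an initial segment $\{g_0,\dots,g_{\max I}\}$ in the forward direction, and with enlarging $I$ by a single large index $M$ in the backward direction. Everything else is a straightforward transcription of the proof of Proposition~\ref{prop: Ibarlucia nullness}, with Proposition~\ref{prop: IT tuple from closed sets} carrying the dynamical content in place of its $IN$-analogue.
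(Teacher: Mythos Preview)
Your proof is correct and follows the same strategy as the paper's (passing to $X_f$, using Proposition~\ref{prop: IT tuple from closed sets} on the closed sets $\tilde f^{-1}((-\infty,r])$ and $\tilde f^{-1}([s,\infty))$ for one direction, and density of $\nu(X)$ in the open intersection for the other), and is in fact slightly more careful than the paper in justifying that an off-diagonal $IT$-pair in $X_f$ may be taken to be separated by $\tilde f$ and that the $g_i$ are pairwise distinct. The only slip is in the parenthetical remark: the independence set for $\{gU_1,gU_2\}$ should be $gI$ rather than $Ig^{-1}$ (since $(gj)^{-1}(gU_k)=j^{-1}U_k$), but the conclusion that $IT$-pairs are preserved by the diagonal action is correct and this does not affect the argument.
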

	\begin{proof}
		The proof is very similar to that of \cite[Fact 3.6]{Ibarlucia}, but we spell out the details for completeness. First suppose $f\colon X\to\Bbb R$ does not satisfy \eqref{eq: tameness}, we want to show that $f$ is not tame. Let $r<s\in\mathbb R$, $(g_i)_{i<\omega}\subseteq G$ and $(x_I)_{I\subseteq_{fin} \omega}\subseteq X$ witness the failure of \eqref{eq: tameness} for $f$. Extend $f$ to $\tilde f\colon X^{\langle f\rangle}\to\Bbb R$, we will show that $\tilde f$ is not tame, which immediately implies that $f$ is not tame either by Remark \ref{remark: tameness can be checked on cyclic space}. Consider \begin{align*}
		A_0&=\{p\in X^{\langle f\rangle}\mid \tilde f(p)\leq r\}\\
		A_1&=\{p\in X^{\langle f\rangle}\mid \tilde f(p)\geq s\}
		\end{align*}
		We claim that the set $Z=\{g_i\mid i<\omega\}$ is an infinite independence set for $\{A_0,A_1\}$. Indeed let $J\subseteq Z$ be finite and $\sigma\colon J\to 2$ an arbitrary function. Let $J_0=\{j\in\omega\mid g_j\in J\}$ and let $S_0=\{j\in J_0\mid \sigma(g_j)=1\}$. Let $I=J_0\setminus S_0\cup\{\max  J_0+1\}$. We will now verify that $$\bigcap_{s\in J}s^{-1}A_{\sigma(s)}\neq\varnothing,$$ by showing that $x_I\in s^{-1}A_{\sigma(s)}$ for every $s\in J$. Every such $s$ is of the form $g_i$ for some $i\in J_0$, we distinguish two cases based on whether $i\in S_0$ or $i\in J_0\setminus S_0$. If $i\in J_0\setminus S_0$, then $i\in I$, so that $f(g_ix_I)<r$ by \eqref{eq: tameness}. In particular $g_ix_I\in A_0$, so that $x_I\in g_i^{-1}A_0$. But $g_i^{-1}=s^{-1}$ and $\sigma(g_i)=0$ since $i\not\in S_0$, so that $x_I\in s^{-1}A_{\sigma(s)}$ as desired.

        If instead $i\in S_0$, then $i\not\in I$ and $i\leq\max I$, so that $f(g_ix_I)>s$ by \eqref{eq: tameness}. In particular $g_ix_I\in A_1$, so that $x_I\in g_i^{-1}A_1$. But $g_i^{-1}=s^{-1}$ and $\sigma(g_i)=1$ since $i\in S_0$, so that $x_I\in s^{-1}A_{\sigma(s)}$ in this case as well.
        
		By Proposition \ref{prop: IT tuple from closed sets} we can find an IT-pair $(x,y)\in A_0\times A_1$. Clearly $\tilde f(x)\neq\tilde f (y)$, so $\tilde f$ is not tame and neither is $f$.
		
		\noindent Conversely suppose that $f\colon X\to\Bbb R$ is not tame, we want to show that $f$ does not satisfy \eqref{eq: tameness}. Extend $f$ to a function $\tilde f\colon\tilde X\to\Bbb R$ on some $G$-compactification $\tilde X$ (for example we could take $\tilde X=X^{\langle f\rangle}$). Note that $\tilde f$ is necessarily not tame, which means that there is a nondiagonal IT-pair $(x,y)\in\tilde X^2$ such that $\tilde f(x)<\tilde f(y)$. Fix $r,s$ such that $\tilde f(x)<r<s<\tilde f(y)$ and, by continuity of $\tilde f$, open neighbourhoods $U_0\ni x$, $U_1\ni y$ such that $\tilde f(U_0)<r$, $\tilde f(U_1)>s$. Let $(g_i)_{i<\omega}$ be an infinite independence set for $\{U_0,U_1\}$, which exists since $(x,y)$ is an IT-pair. Now let $I$ be a finite subset of $\omega$ and let $\sigma\colon\max I\to 2$ be the complement of its indicator function, so that $\sigma(i)=0$ if and only if $i\in I$. Since $\{g_i\mid i<\omega\}$ is an independence set for $\{U_0,U_1\}$, we can find $$\tilde x_I\in\bigcap_{i\leq\max I}g_i^{-1}U_{\sigma(i)}.$$
		We claim that $r,s$, $(g_i)_{i<\omega}$ and $\tilde x_I$ as above witness that $\tilde f$ does not satisfy \eqref{eq: tameness}. 
        Indeed if $i\in I$, then $\sigma(i)=0$ and $\tilde x_I\in g_i^{-1}U_0$, so that $g_i\tilde x_I\in U_0$ and $f(g_i\tilde x_I)<r$. If instead $i\not\in I$ and $i\leq\max I$, then $\sigma(i)=1$ and $\tilde x_I\in g_i^{-1}U_1$, so that $g_i\tilde x_I\in U_1$ and $f(g_i\tilde x_I)>s$. 
    
        The only issue is that $\tilde x_I\in\tilde X$, while we were looking for points of $X$, but we can approximate $\tilde x_I$ with some $x_I\in X$, since we only need the values of $\tilde f(g_i\tilde x_I)$ and $f(g_ix_I)$ to be close for finitely many $i\leq\max I$.
		
	\end{proof}

	\begin{defn}\label{defn: M_n(G) and M_t(G)}
		Let $G$ be a topological group. We denote with $M_n(G)$ the \emph{universal minimal null flow of} $G$. This is a minimal null $G$-flow such that if $X$ is any other minimal null $G$-flow, there exists a $G$-equivariant surjection $M_n(G)\to X$. In other words $M_n(G)$ has all minimal null $G$-flows as factors. Existence and uniqueness of $M_n(G)$ follow from \cite[Theorem 1]{GutmanLi}: null flows are closed under products and subflows, so also under projective limits, by \cite[Corollary 5.5]{KerrLi}, while minimal flows are easily checked to be closed under projective limits. Similarly we let $M_t(G)$ denote the universal minimal tame flow of $G$, whose existence and uniqueness up to isomorphisms of $G$-flows can be established using Theorem 1 of \cite{GutmanLi} again, together with the fact that tame flows are also closed under products and subflows by \cite[Corollary 6.5]{KerrLi}.
	\end{defn}

    \section{The null and tame Ramsey properties}\label{section: null and tame Ramsey properties}
	\subsection{Colourings, Nullness, fixed points and free joint embedding}

    Before proceeding with this section, we need to fix some notational conventions, largely borrowed from \cite{NVT}. Given a \Fraisse structure $\F$ and a finite substructure $A\subseteq F$, we denote by $\Stab(A)$ the \emph{pointwise} stabilizer of $A$ in $G=\Aut(\F)$ and by $\binom \F A$ the set of \emph{embeddings} of $A$ in $\F$. With those conventions, thanks to the ultrahomogeneity of $\F$, we can identify $G/\Stab(A)$ and $\binom \F A$. 

    We will be interested in functions $\chi\colon\binom \F A\to\mathbb R$, which we will call \emph{colourings} or \emph{finite colourings} of $\binom \F A$, depending on whether their image is finite or not. If a colouring $\chi$ is constant on a set $S$, we will also say that $S$ is \emph{monochromatic for $\chi$}. Through the identification of $\binom \F A$ and $G/\Stab(A)$ outlined above we can think about a colouring $\chi\colon\binom \F A\to\mathbb R$ as a function $G/\Stab(G)\to\mathbb R$, which in turn can be identified with a function $\overline{\chi}\colon G\to\mathbb R$ which is constant on small enough balls of the right uniformity of $G$ (i.e. which is constant on cosets of $\Stab(A)$ for a big enough $A\subseteq \F$). We will confuse $\chi\colon\binom \F A\to\mathbb R$ and the corresponding $\overline{\chi}\colon G\to\mathbb R$ in the following, so that, for example, we will talk about colourings of $\binom \F A$ in $\RUC_b(G)$ and its subalgebras.
    
    Specializing the characterizations of nullness and tameness established in Proposition \ref{prop: Ibarlucia nullness} and Proposition \ref{prop: tameness} to the context of colourings we obtain the following corollary.

\begin{cor} \label{cor: not null/tame colouring}
	\noindent\begin{enumerate}
		\item 	A colouring $\chi\colon\binom{\F}{A}\to \Bbb R$ is \emph{not null} if and only if there are $r,s\in\Bbb R$ with $r<s$ such that for every $n<\omega$ there exist $(g_i)_{i<n}\subseteq \Aut(\F)$ and $(h_I)_{I\subseteq n}\subseteq \binom{\F}{A}$ such that
		\begin{align*}
			\chi(g_ih_I)&<r \quad\text{if $i\in I$} \\
			\chi(g_ih_I)&>s \quad\text{if $i\not\in I$}.
		\end{align*}
		
		\item A colouring $\chi\colon\binom{\F}{A}\to \Bbb R$ is \emph{not tame} if and only if there are $r,s\in\Bbb R$ with $r<s$ such that there exist sequences $(g_i)_{i<\omega}\subseteq\Aut(\F)$ and $(h_I)_{I\subseteq_{fin}\omega}\subseteq \binom{\F}{A}$ such that
		\begin{align*}
			\chi(g_ih_I)&<r \quad\text{if $i\in I$} \\
			\chi(g_ih_I)&>s \quad\text{if $i\not\in I$ and $i\leq\max I$}.
		\end{align*}
	\end{enumerate}
\end{cor}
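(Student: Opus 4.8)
The plan is to derive Corollary~\ref{cor: not null/tame colouring} directly from Proposition~\ref{prop: Ibarlucia nullness} and Proposition~\ref{prop: tameness} by applying them to the $G$-space $G = \Aut(\F)$ itself, using the identification of a colouring $\chi\colon\binom{\F}{A}\to\Bbb R$ with its associated function $\overline\chi\colon G\to\Bbb R$ established in the preceding paragraphs. The key observation is that $\overline\chi$ is constant on left cosets of $\Stab(A)$, hence is right uniformly continuous (the partition of $G$ into these cosets is a clopen partition refining a neighbourhood of the identity in the right uniformity), so $\overline\chi\in\RUC_b(G)$ — provided we note that a colouring with finite image is automatically bounded, and in the non-finite-image case we should either restrict attention to bounded colourings or simply observe that the statement is about the existence of witnessing reals $r<s$ and is unaffected by composing with a bounded homeomorphism of $\Bbb R$. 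So the first step is to record this membership in $\RUC_b(G)$.

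For part~(1): I would apply Proposition~\ref{prop: Ibarlucia nullness} with $X = G$ acting on itself by left translation. The proposition says $\overline\chi$ is null iff there are no $r<s$ such that for every $n$ one finds $(g_i)_{i<n}\subseteq G$ and $(x_I)_{I\subseteq n}\subseteq G$ with $\overline\chi(g_i x_I)<r$ for $i\in I$ and $\overline\chi(g_i x_I)>s$ for $i\notin I$. Negating, $\chi$ is \emph{not} null iff such $r<s$ exist. The only remaining point is to translate the witnessing points $x_I\in G$ into elements $h_I\in\binom{\F}{A}$: since $\overline\chi$ factors through $G\to G/\Stab(A)\cong\binom{\F}{A}$, the value $\overline\chi(g_i x_I)$ depends only on the coset $g_i x_I\Stab(A)$, i.e.\ only on $g_i$ acting on the embedding $h_I := x_I{\restriction}A \in\binom{\F}{A}$ (equivalently $x_I\Stab(A)$), and $\overline\chi(g_i x_I) = \chi(g_i\cdot h_I)$ under the identification. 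Conversely any $h_I\in\binom{\F}{A}$ lifts to some $x_I\in G$ by ultrahomogeneity. This gives the claimed equivalence verbatim. Part~(2) is identical, applying Proposition~\ref{prop: tameness} instead, with the finite index sets $I\subseteq_{fin}\omega$ and the extra clause ``$i\leq\max I$'' carried over unchanged.

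I do not expect a serious obstacle here — this is essentially an unwinding of definitions through the dictionary between colourings of $\binom{\F}{A}$ and right-uniformly-continuous functions on $G$. The one place requiring a little care is the membership $\overline\chi\in\RUC_b(G)$ when $\chi$ has infinite image: one should be explicit that Proposition~\ref{prop: Ibarlucia nullness} and Proposition~\ref{prop: tameness} as stated require $f\in\RUC_b(X)$, so either the corollary is implicitly about bounded colourings (which suffices for all later applications, since the dynamical criteria only ever see finitely many values of $\chi$ at a time), or one precomposes $\chi$ with a strictly increasing bounded homeomorphism $\Bbb R\to(0,1)$, which does not affect the existence of separating reals $r<s$. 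Beyond that, the second subtlety — already handled inside the proof of Proposition~\ref{prop: tameness} — of approximating ideal points by genuine points of $X$ does not even arise here, because $X=G$ and the $h_I$ are genuine elements of $\binom{\F}{A}$ to begin with; nothing needs to be approximated.
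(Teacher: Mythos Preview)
Your proposal is correct and matches the paper's approach exactly: the paper presents this corollary without proof, stating only that it follows by ``specializing the characterizations of nullness and tameness established in Proposition~\ref{prop: Ibarlucia nullness} and Proposition~\ref{prop: tameness} to the context of colourings,'' and you have simply unwound that specialization in detail (including the identification $G/\Stab(A)\cong\binom{\F}{A}$ and the passage between $x_I\in G$ and $h_I\in\binom{\F}{A}$). Your remark on boundedness is a reasonable caveat the paper leaves implicit, but is not needed for the later applications, which only involve finite colourings.
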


    We need to borrow a few more definitions from \cite{NVT} before stating Theorem 1.2 of the same paper, which will be our main tool to translate combinatorial Ramsey theoretic properties into dynamical ones.

    \begin{defn}
        Let $\F$ be a \Fraisse structure and $\X$ a class of $\Aut(\F)$-flows. A pointed $\Aut(\F)$-flow $\Aut(\F)\curvearrowright (X,x)$ is called an \emph{ambit} if the orbit of $x$ is dense in $X$. We say that an ambit $\Aut(\F)\curvearrowright(X,x)$ is an \emph{$\X$-$\Aut(\F)$-ambit} if $\Aut(\F)\curvearrowright X\in\X$, that is if we obtain an element of $\X$ after forgetting the distinguished point.
    \end{defn}

    \begin{defn}
        Given $\A\subseteq\RUC_b(\Aut(\F))$ and $\varepsilon>0$ we say that $\F$ has the \emph{Ramsey property} (\emph{Ramsey property up to $\varepsilon$}) for finite colourings in $\A$ if for all $A,B\in\Age(\F)$, every finite set $C\subseteq \A$ of finite colourings of $\binom{\F}{A}$, there exists $h\in\binom{\F}{B}$ such that every $\chi\in C$ is constant (constant up to $\varepsilon$) on $h\circ \binom B A$. In particular when $\A$ is the subalgebra of null (tame) functions we say that $\F$ has the \emph{null (tame) Ramsey property (up to $\varepsilon)$}.
    \end{defn}

	\begin{thm}[\protect{\cite[Theorem 1.2]{NVT}}]\label{thm: NVT}
		Let $\F$ be a \Fraisse structure, $\X$ be a class of $\Aut(\F)$-flows such that the class of $\X$-$\Aut(\F)$-ambits is closed under suprema and factors, and that every $\Aut(\F)\curvearrowright X\in\X$ admits some $x\in X$ such that $\Aut(\F)\curvearrowright\overline{\Aut(\F)\cdot x}\in\X$. Then $$\A=\{f\in\RUC_b(G)\mid G\curvearrowright\overline{G\bullet f}\in\X\}$$ is a unital, left-invariant, closed subalgebra of $\RUC_b(\Aut(\F))$ and the following are equivalent:
		\begin{enumerate}
			\item Every $\Aut(\F)$-flow in $\X$ has a fixed point.
			\item For every $\varepsilon>0$, $\F$ has the Ramsey property up to $2\varepsilon$ for the finite colourings in $(\A)_\varepsilon$.
		\end{enumerate}
		Those imply the following equivalent statements:
		\begin{enumerate}\setcounter{enumi}{2}
			\item Every zero-dimensional $\Aut(\F)$-flow in $\X$ has a fixed point.
			\item $\F$ has the Ramsey property for the finite colourings in $\A$.
		\end{enumerate}
		When the finite colourings are dense in $\A$, all those statements are equivalent.
	\end{thm}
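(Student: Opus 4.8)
The plan is to build the universal $\X$-$\Aut(\F)$-ambit and then run a Kechris--Pestov--Todor\v{c}evi\'{c} style argument identifying its fixed points with a Ramsey property, tracking approximations throughout; write $G=\Aut(\F)$. First I would check the \emph{algebra claim}. Unitality of $\A=\{f\in\RUC_b(G):G\curvearrowright\overline{G\bullet f}\in\X\}$: the one-point flow is a factor of every $X\in\X$ (pass, via the distinguished-point hypothesis, to an $\X$-ambit inside $X$ and collapse it), so it lies in $\X$ and the constants come from it. Left-invariance: $\overline{G\bullet(g\cdot f)}$ and $\overline{G\bullet f}$ are isomorphic $G$-flows, only the distinguished point moving. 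Closure under sums, products and uniform limits: the cyclic ambit of $f_1+f_2$, of $f_1f_2$, or of $\lim_n f_n$ is a factor of the supremum $\bigvee_n\overline{G\bullet f_n}$, an $\X$-ambit by hypothesis. Let $(X_\A,x_\A)$ be the $G$-compactification of $G$ attached to $\A$. It is the greatest $\X$-ambit: for any $(Y,y)\in\X$ the algebra of functions on $G$ coming from $(Y,y)$ lies inside $\A$ (each such function has cyclic ambit a factor of $Y\in\X$), so $Y$ is a factor of $X_\A$; moreover $X_\A\in\X$, being the supremum of all cyclic $\X$-ambits. Hence statement (1) is equivalent to ``$X_\A$ has a fixed point'': forward because $X_\A\in\X$, and backward because for any $X\in\X$ one picks $x$ with $\overline{Gx}$ an $\X$-ambit (hence a factor of $X_\A$), so a fixed point of $X_\A$ descends to one in $\overline{Gx}\subseteq X$.

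The crux is the equivalence ``$X_\A$ has a fixed point'' $\Leftrightarrow$ (2); recall that a fixed point of $X_\A$ is exactly a $G$-invariant character of $\A$. For ``(2) $\Rightarrow$ fixed point'' I would obtain such a character as a cluster point of a net $(\nu_\A(g_\alpha))_\alpha$ indexed by triples (finite $\mathcal F_0\subseteq\A$, tolerance $\delta>0$, finite substructure $B\subseteq\F$ from a fixed exhaustion of $\F$): each $f\in\mathcal F_0$ is within $\delta$ of a finite colouring of some $\binom{\F}{A}$ (as $f\in\RUC_b(G)$ is constant up to $\delta$ on $\Stab(A)$-cosets for $A$ large, and is bounded), these colourings lie in $(\A)_\delta$, the Ramsey property up to $2\delta$ provides $h\in\binom{\F}{B}$ making all of them $2\delta$-constant on $h\circ\binom{B}{A}$, and $g_\alpha$ is any extension of $h$ to $G$. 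A cluster point $p$ exists by compactness of $X_\A$, and it is $G$-invariant: for a prescribed $g_0\in G$, once $B$ is large enough to contain the part of $\F$ that $g_0$ disturbs, the colourings --- hence the elements of $\mathcal F_0$ up to $\delta$ --- cannot separate $g_0g_\alpha$ from $g_\alpha$, so $g_0\cdot p=p$. Conversely, a $G$-invariant character assigns a value $c_f$ to each $f\in\A$, and because $G$ acts transitively on each $\binom{\F}{A}$ by ultrahomogeneity, invariance yields, for every finite $\mathcal F_0\subseteq\A$ and every $\delta>0$, some $g\in G$ bringing all of $\mathcal F_0$ within $\delta$ of their values along an entire copy of $B$ over $A$ (concretely, with the identification $e\leftrightarrow\Stab(A)g$, $g^{-1}|_A=e$, the copy of $B$ is $g^{-1}|_B$); if $\chi\in(\A)_\varepsilon$ is approximated within $\varepsilon$ by some $f_\chi\in\A$, this copy of $B$ makes $\chi$ be $2\varepsilon$-constant, i.e.\ (2) holds.

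For (3) $\Leftrightarrow$ (4) I would argue directly, without approximations. Given $A,B\in\Age(\F)$ and a finite family $C$ of finite colourings $\chi\colon\binom{\F}{A}\to(\text{finite set})$ with each $\chi\in\A$, form $Z=\overline{G\cdot(\chi)_{\chi\in C}}\subseteq\prod_{\chi\in C}(\ran\chi)^{\binom{\F}{A}}$, a zero-dimensional $\X$-ambit (it is a supremum of cyclic $\X$-ambits); by (3) it has a fixed point $(\chi^\ast)_{\chi\in C}$. Each $\chi^\ast$ is $G$-invariant, hence constant by transitivity on $\binom{\F}{A}$, say $\chi^\ast\equiv c_\chi$; and since $(\chi^\ast)_\chi$ lies in the orbit closure of $(\chi)_\chi$, evaluating against the embeddings of $A$ into a fixed copy of $B$ in $\F$ produces $g\in G$ with $\chi(g^{-1}e)=c_\chi$ for all those $e$ and all $\chi\in C$, and applying $g^{-1}$ to that copy of $B$ witnesses (4). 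For (4) $\Rightarrow$ (3) one replaces a zero-dimensional $X\in\X$ by $\overline{Gx}\in\X$, notes that $C(\overline{Gx})$ is generated by finitely-valued functions whose pullbacks to $G$ are finite colourings in $\A$, and runs (4) along an exhaustion of $\F$ to get automorphisms clustering at a $G$-fixed point of $\overline{Gx}\subseteq X$, exactly as above. Finally (1) $\Rightarrow$ (3) is trivial, (2) $\Rightarrow$ (4) follows through (2) $\Rightarrow$ (1) $\Rightarrow$ (3) $\Rightarrow$ (4), and if the finite colourings are dense in $\A$ then $\A$ is generated as a closed algebra by the spectral idempotents of the finite colourings it contains (and their $G$-translates), so $X_\A$ is zero-dimensional; then (3) applied to $X_\A\in\X$ gives a fixed point of $X_\A$, i.e.\ (1), so all four statements coincide.

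The step I expect to be the real obstacle is the fixed-point/Ramsey equivalence of the second paragraph: matching a genuine fixed point of the universal $\X$-ambit with the \emph{approximate} Ramsey property. It requires the right identification of $\binom{\F}{A}$ with cosets, a consistent choice between the two isomorphic ambit models $G^{\langle f\rangle}$ and $(\overline{G\bullet f},f)$, careful handling of left versus right translations, and --- most delicately --- propagating the $\varepsilon$'s so that an $\varepsilon$-approximation of a function by a colouring turns an honest fixed point into exactly a $2\varepsilon$-Ramsey statement (and a $2\varepsilon$-Ramsey hypothesis into an honest fixed point in the limit $\varepsilon\to0$). One should also be mindful that ``closed under suprema'' must be used for arbitrary set-indexed suprema, not just finite ones, for the universal $\X$-ambit to exist and for $\A$ to be norm-closed. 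Everything else is either soft topological dynamics (factors, suprema, inverse limits, Stone duality for zero-dimensional flows) or routine \Fraisse bookkeeping.
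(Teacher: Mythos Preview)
The paper does not prove this theorem; it is quoted verbatim as \cite[Theorem 1.2]{NVT} and used as a black box. The only commentary the paper adds is the paragraph following the statement, noting that Nguyen Van Th\'{e}'s original version is phrased for complex-valued functions and $C^\ast$-subalgebras, while here one works with real-valued functions and closed subalgebras, a cosmetic change that does not affect the proof. So there is no ``paper's own proof'' to compare against.

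That said, your sketch is a faithful outline of how the result is actually established in \cite{NVT}: one builds the greatest $\X$-ambit from the algebra $\A$, identifies fixed points with $G$-invariant characters, and runs the standard KPT compactness argument (approximate $f\in\A$ by finite colourings via right uniform continuity, apply the approximate Ramsey property along an exhaustion of $\F$, extract a cluster point). Your treatment of the zero-dimensional equivalence (3)$\Leftrightarrow$(4) and of the density clause is also along the right lines. The one place where your write-up is a bit loose is the direction ``fixed point $\Rightarrow$ (2)'': you should be explicit that the invariant character $p$ lies in the closure of the $G$-orbit of $e$ in $X_\A$, so that for any finite family of $f$'s in $\A$ and any finite $B$ there is $g\in G$ with $|f(gh)-c_f|<\delta$ simultaneously for all $h$ ranging over the copies of $A$ inside $B$; then for a finite colouring $\chi\in(\A)_\varepsilon$ approximated by such an $f$, the oscillation of $\chi$ on $g\circ\binom{B}{A}$ is at most $2\varepsilon+2\delta$, and letting $\delta\to 0$ (or rather using that $\chi$ has finite range) gives the $2\varepsilon$ bound. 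This bookkeeping is exactly the ``delicate $\varepsilon$-propagation'' you flag yourself, and it is the only point where a reader might ask for more detail.
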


    Let us point out that in \cite{NVT}, the elements of $\RUC(X)$ and $\RUC_b(X)$ are complex valued functions, so that Theorem 1.2 of \cite{NVT} is stated in terms of $C^\ast$-subalgebras. In our setting the elements of $\RUC(X)$ and $\RUC_b(X)$ are real valued functions instead, so that Theorem \ref{thm: NVT} is stated in terms of closed subalgebras. This is not an important difference, since the real valued version can be proved exactly as the complex valued one, modulo replacing the correspondence between $G$-compactifications of $X$ and closed $C^\ast$-algebras of $\RUC_b(G,\mathbb C)$ with the correspondence between $G$-compactifications of $X$ and closed subalgebras of $\RUC_b(G,\mathbb R)$ outlined in Subsection \ref{subsection: subalgebras and compactifications}.

    We want to apply Theorem \ref{thm: NVT} when $\X$ is either the class of null $\Aut(F)$-flows or the class of tame $\Aut(\F)$-flows. Both of those classes are closed under suprema and factors by Corollary 5.5 and Corollary 6.5 of \cite{KerrLi}, respectively. The same corollaries also show that for any null (tame) $\Aut(\F)$-flow $X$ and any $x\in X$, $\Aut(\F)\curvearrowright\overline{\Aut(\F)\cdot x}$ is null (tame), so that all the hypothesis of Theorem \ref{thm: NVT} are satisfied, except at most the density hypothesis at the end. Moreover since $f\colon G\to\mathbb R$ is null (tame) if and only if $G\curvearrowright G^{\langle f\rangle}$ is null (tame), and the latter flow can be identified with $G\curvearrowright\overline{G\bullet f}$ as in Subsection \ref{subsection: subalgebras and compactifications}, when $\X$ is the class of null (tame) flows, the corresponding $\A$ is the algebra of null (tame) functions.

With the preceding paragraphs in mind, we obtain the following corollary by taking $\X$ to be the class of null, respectively tame, flows.

\begin{cor}\label{cor: NVT for null/tame}
    Let $\mathcal F$ be a \Fraisse structure, $G=\Aut(\mathcal F)$ and $\A\subseteq\RUC_b(G)$ the algebra of null (tame) functions. The following are equivalent:
	\begin{enumerate}
		\item Every null (tame) $G$-flow has a fixed point.
		\item For every $\varepsilon>0$, $\mathcal F$ has the Ramsey property up to $2\varepsilon$ for the finite colourings in $(\A)_\varepsilon$.
	\end{enumerate}
	Those imply the following equivalent statements:		\begin{enumerate}\setcounter{enumi}{2}
		\item Every zero-dimensional null (tame) $G$-flow has a fixed point.
		\item $\F$ has the Ramsey property for the finite colourings in $\A$. In other words $\mathcal F$ has the Ramsey property for the finite null (tame) colourings, since $\A$ is the algebra of null (tame) functions.
	\end{enumerate}
When the finite colourings are dense in $\A$, all those statements are equivalent.
\end{cor}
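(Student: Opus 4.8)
The plan is to invoke Theorem \ref{thm: NVT} with $\X$ taken to be the class of null $\Aut(\F)$-flows for the first pair of statements, and the class of tame $\Aut(\F)$-flows for the second; the two cases are verbatim the same except that the appeals to \cite[Corollary 5.5]{KerrLi} are replaced by appeals to \cite[Corollary 6.5]{KerrLi}, and the (unlabelled) Remark following Definition \ref{defn: null system} is replaced by Remark \ref{remark: tameness can be checked on cyclic space}. So I only describe the null case.

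First I would verify the three structural hypotheses of Theorem \ref{thm: NVT} for $\X$ the class of null $\Aut(\F)$-flows. Closure of the class of null $\Aut(\F)$-ambits under suprema holds because $\bigvee_\lambda (X_\lambda, x_\lambda)$ is by construction a subflow of $\prod_\lambda X_\lambda$, and null flows are closed under arbitrary products and under subflows by \cite[Corollary 5.5]{KerrLi}; closure under factors is the remaining assertion of \cite[Corollary 5.5]{KerrLi} (a factor of an ambit is again an ambit, and its underlying flow is a factor of a null flow, hence null). For the third hypothesis, given a null flow $\Aut(\F) \curvearrowright X$ and \emph{any} $x \in X$, the orbit closure $\overline{\Aut(\F) \cdot x}$ is a subflow of $X$ and so is null, again by \cite[Corollary 5.5]{KerrLi}.

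Next I would identify the algebra $\A = \{ f \in \RUC_b(G) \mid G \curvearrowright \overline{G \bullet f} \in \X \}$ that Theorem \ref{thm: NVT} attaches to this $\X$ with the algebra of null functions on $G = \Aut(\F)$. By the discussion in Subsection \ref{subsection: subalgebras and compactifications} (resting on \cite[Proposition 3.9]{NVT}), the ambit $(\overline{G \bullet f}, f)$ is isomorphic to the cyclic ambit $(G^{\langle f \rangle}, 1_G)$, and by the Remark following Definition \ref{defn: null system} a function $f \in \RUC_b(G)$ is null precisely when the cyclic flow $G \curvearrowright G^{\langle f \rangle}$ is null. Combining these, $f \in \A$ if and only if $f$ is null, which is the identification claimed in the Corollary; the same reasoning with Remark \ref{remark: tameness can be checked on cyclic space} handles the tame case.

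With these verifications in place, statements (1)--(4) together with the concluding sentence about the finite colourings being dense in $\A$ are precisely what Theorem \ref{thm: NVT} asserts for this choice of $\X$ and $\A$, so nothing further is needed. The only step that is not a literal quotation of Theorem \ref{thm: NVT} is the identification of $\A$ with the null (resp. tame) algebra, and since that is immediate from the cyclic-flow characterizations just cited, I do not anticipate any genuine obstacle; the real work of the paper lies in the later sections, where statement (2) is established for free joint embedding (resp. free amalgamation) classes.
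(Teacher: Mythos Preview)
Your proposal is correct and matches the paper's own argument essentially verbatim: the paper also verifies the hypotheses of Theorem \ref{thm: NVT} for $\X$ the class of null (resp.\ tame) flows via \cite[Corollary 5.5]{KerrLi} (resp.\ \cite[Corollary 6.5]{KerrLi}) and identifies $\A$ with the null (resp.\ tame) algebra through the isomorphism $(\overline{G\bullet f},f)\cong (G^{\langle f\rangle},1_G)$ together with the cyclic-flow characterizations of nullness and tameness. Nothing is missing.
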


We can now verify that free joint embedding classes have the approximate null Ramsey property.

\begin{thm} \label{thm: FJEP implies null Ramsey property}
    Let $\mathcal F$ be a \Fraisse structure, $G=\Aut(\mathcal F)$, $\mathcal A\subseteq\RUC_b(G)$ the algebra of null functions and $\varepsilon>0$. If $\Age(\mathcal F)$ has the free joint embedding property, then $\mathcal F$ has the Ramsey property up to $2\varepsilon$ for the finite colourings in $(\mathcal A)_\varepsilon$.
\end{thm}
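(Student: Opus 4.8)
The plan is to prove the contrapositive: assuming $\Age(\F)$ has the free joint embedding property, we show directly that $\F$ has the Ramsey property up to $2\varepsilon$ for the finite colourings in $(\A)_\varepsilon$, where $\A$ is the algebra of null functions. So fix $A, B \in \Age(\F)$, fix a finite set $C = \{\chi_1, \ldots, \chi_m\}$ of finite colourings of $\binom{\F}{A}$, each of which is $\varepsilon$-approximable by a null colouring, i.e. for each $\chi_j$ there is a null colouring $\psi_j$ with $\|\chi_j - \psi_j\|_\infty \le \varepsilon$. We must produce $h \in \binom{\F}{B}$ such that each $\chi_j$ is constant up to $2\varepsilon$ on $h \circ \binom{B}{A}$. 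By the triangle inequality it suffices to find $h$ on which each \emph{null} colouring $\psi_j$ is constant up to (a little less than, say) $0$ — in fact, exactly constant would be ideal, but more realistically we aim to make each $\psi_j$ vary by less than, say, $\varepsilon$ on $h \circ \binom{B}{A}$, since then $\chi_j$ varies by less than $\varepsilon + 2\varepsilon$... so the bookkeeping should instead be: each $\chi_j$ lies within $\varepsilon$ of a null $\psi_j$; I first show $\F$ has the \emph{exact} Ramsey property for finite \emph{null} colourings (valued in a finite set), giving $h$ on which every $\psi_j$ is literally constant; then $\chi_j$ varies by at most $2\varepsilon$ on $h\circ\binom{B}{A}$. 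Thus the real content is: \emph{free joint embedding classes have the exact Ramsey property for finite null colourings}.

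To prove that, suppose toward a contradiction that some finite null colouring $\chi \colon \binom{\F}{A} \to \Bbb R$ (with finite image, so WLOG two values $r' < s'$ with $r' < r < s < s'$ for some real $r < s$) is not constant on $h \circ \binom{B}{A}$ for \emph{any} $h \in \binom{\F}{B}$. The goal is to contradict Corollary \ref{cor: not null/tame colouring}(1): I must manufacture, for every $n$, automorphisms $(g_i)_{i<n} \subseteq \Aut(\F)$ and embeddings $(h_I)_{I \subseteq n} \subseteq \binom{\F}{A}$ with $\chi(g_i h_I) < r$ for $i \in I$ and $\chi(g_i h_I) > s$ for $i \notin I$. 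The idea is to use the free joint embedding property to build, inside $\F$, a disjoint union of $n$ copies $B_0, \ldots, B_{n-1}$ of $B$ (possible since $\bigsqcup_i B_i$ has age contained in $\Age(\F)$, as remarked after the definition of free joint embedding). Since $\chi$ is not constant on any copy of $B$, inside each $B_i$ we can pick two embeddings $a_i^0, a_i^1 \in \binom{B_i}{A}$ (i.e. embeddings of $A$ into the $i$-th copy) with $\chi(a_i^0) < r$ and $\chi(a_i^1) > s$. Now for each $I \subseteq n$, by ultrahomogeneity of $\F$ there is an automorphism that simultaneously moves each copy $B_i$ to a fixed position while, for $i \in I$ it arranges the "$0$-embedding" to land where a reference embedding sits, and for $i \notin I$ the "$1$-embedding" does — more precisely I will fix one reference embedding $B \hookrightarrow \F$, fix reference embeddings $h_I \colon A \hookrightarrow \F$ to be chosen from a single fixed copy, and let $g_i$ be an automorphism of $\F$ sending $B_i$ to the reference copy of $B$ via the relevant identification. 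The disjointness (freeness) of the $B_i$'s is exactly what lets these automorphisms be chosen independently for each $i$ without interfering, and ultrahomogeneity promotes the partial isomorphisms $B_i \to B$ to full automorphisms of $\F$.

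I expect the main obstacle to be organizing the combinatorial gadget so that a \emph{single} sequence $(g_i)_{i<n}$ works simultaneously for \emph{all} $2^n$ sets $I$ — one has to be careful that $g_i$ depends only on $i$, not on $I$, while $h_I$ depends only on $I$. The trick is to put all the freedom into $h_I$: take $n$ disjoint copies $A_0, \ldots, A_{n-1}$ of $A$ sitting inside $n$ disjoint copies of $B$ inside $\F$, each copy $A_i$ equipped with its own pair of "positions" realizing a $\chi$-value below $r$ and above $s$ respectively; let $g_i$ be the automorphism that maps the $i$-th copy of $B$ onto a single fixed copy of $B$ in a fixed way; and let $h_I$ be the embedding $A \to \bigsqcup_i B_i$ that, on interacting with $g_i$, lands in the low-position if $i \in I$ and the high-position if $i \notin I$ — this requires choosing, for each $I$, a suitable automorphism of $\bigsqcup B_i$ (which extends to $\F$ by ultrahomogeneity since its domain's age is in $\Age(\F)$) that permutes the two positions in copy $i$ according to membership in $I$. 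Checking that $\chi(g_i h_I)$ takes the right value reduces to the defining non-constancy of $\chi$ on copies of $B$ together with the fact that $g_i h_I$ is, by construction, an embedding of $A$ whose image in the fixed reference copy of $B$ is exactly one of the two chosen positions; since $\chi$ is $\Aut(\F)$-... no — $\chi$ is just a colouring of $\binom{\F}{A}$, so one must track values literally through the explicit embeddings, which is the routine-but-fiddly part I would spell out carefully.
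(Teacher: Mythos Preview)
Your overall strategy---contrapositive, disjoint copies of $B$ via the free joint embedding property, then contradict the characterization of nullness in Corollary~\ref{cor: not null/tame colouring}---is exactly right and matches the paper. But two steps, as written, do not go through.

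\medskip

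\textbf{The reduction.} You propose to replace each finite colouring $\chi_j\in(\A)_\varepsilon$ by a null $\psi_j$ with $\|\chi_j-\psi_j\|_\infty\le\varepsilon$ and then invoke an \emph{exact} Ramsey property for finite null colourings. The problem is that $\psi_j$ is merely a null element of $\RUC_b(G)$; there is no reason it should have finite image, and discretizing it destroys any control over nullness. The paper avoids this entirely: it works directly with the finite colouring $\chi$, builds $(g_i)_{i<n}$ and $(x_I)_{I\subseteq n}$ with $\chi(g_ix_I)\in\{k_0,k_1\}$ where $k_1-k_0>2\varepsilon$, and then observes that \emph{every} $f$ with $\|f-\chi\|_\infty<\varepsilon$ inherits the required inequalities (with thresholds $r,s$ chosen strictly between $k_0+\varepsilon$ and $k_1-\varepsilon$). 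So rather than proving Ramsey for null colourings, one shows that a bad finite colouring has no null function in its $\varepsilon$-ball.

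\medskip

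\textbf{The gadget.} This is the more serious gap. With only $n$ disjoint copies $B_0,\ldots,B_{n-1}$ and $g_i$ defined as ``send $B_i$ to a fixed reference copy'', you cannot make a single embedding $h_I\colon A\to\F$ satisfy the constraints for all $i<n$ simultaneously: $h_I$ lands in exactly one place, say in $B_{j}$, and then $g_ih_I$ for $i\neq j$ is completely uncontrolled (you have said nothing about what $g_i$ does outside $B_i$). Composing with an automorphism $\tau_I$ of $\bigsqcup_i B_i$ that ``permutes the two positions in copy $i$ according to membership in $I$'' does not help, because $\tau_Ih_*$ still lands in a single copy.

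The paper's construction uses $2^n$ disjoint free copies of $B$, indexed by subsets $I\subseteq n$, and puts $x_I=h_I\circ h_I^1$ inside the $I$-th copy. Then $g_i$ is built as a partial isomorphism defined on \emph{all} $2^n$ copies at once: on the $I$-th copy it sends $h_Ih_I^1(A)$ to $h_Ih_I^0(A)$ when $i\in I$, and is the identity when $i\notin I$. Freeness of the joint embedding is exactly what guarantees these $2^n$ independent choices glue to a single partial isomorphism of $\F$; ultrahomogeneity then extends it to an element of $\Aut(\F)$. With this indexing, $g_ix_I$ stays inside the $I$-th copy and equals $h_I\circ h_I^0$ or $h_I\circ h_I^1$ according to whether $i\in I$, giving $\chi(g_ix_I)=k_0$ or $k_1$ as required. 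The point you were looking for---that $g_i$ must depend only on $i$ while $h_I$ depends only on $I$---is achieved by letting the \emph{copy} encode $I$ and letting the \emph{action of $g_i$ on that copy} encode whether $i\in I$.
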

\begin{proof}
    We prove the contrapositive. Let $A,B\in\Age(\F)$ and $\chi_1,\ldots,\chi_n$ be colourings of $\binom \F A$ with $k$ colours such that there is no $h\in\binom \F B$ such that $h\circ\binom BA$ is monochromatic up to $2\varepsilon$ for all of the colourings $\chi_i$ at the same time. We want to show that at least one of the colourings $\chi_i$ is not in $(\mathcal A)_\varepsilon$. Consider the structure $D=\bigsqcup_{i<\omega} B_i$, with $B_i\cong B$ for all $i$, which can be embedded in $\F$ as mentioned above. We fix an embedding $g\colon D\to \F$ and, by restricting it to the various $B_i$'s, we obtain infinitely many embeddings of $B$ into $\F$ with pairwise disjoint images and with pairwise no relations between the images, formally let $$H''=\left\{h''\in\binom \F B\,\middle|\, \exists i\in\omega\quad h''=g\upharpoonright B_i\right\}.$$ 
    
    By assumption for every $h''\in H''$ there exists $1\leq j\leq n$ such that $\chi_j$ has oscillation $>2\varepsilon$ on $h''\circ\binom BA$. Since there are finitely many colourings while $H''$ is infinite, the pigeonhole principle gives $1\leq i\leq n$ such that for infinitely many $h''\in H''$, $\chi_i$ has oscillation $>2\varepsilon$ on $h''\circ \binom B A$. To simplify the notation we write $\chi$ for $\chi_i$ in the following. Our goal is to show that $\chi$ is not in $(\mathcal A)_\varepsilon$ by using the characterization in Proposition \ref{prop: Ibarlucia nullness}. Let $$H'=\left\{h''\in H''\,\middle|\, \chi \text{ is not constant up to $2\varepsilon$ on } h''\circ\binom B A\right\}.$$
    
    Since $\chi$ has finite image while $H'$ is infinite, another application of the pigeonhole principle yields $k_0<k_1\in\mathbb R$ such that for infinitely many $h'\in H'$, both $k_0$ and $k_1$ are in $\chi\left(h'\circ\binom B A\right)$ and $k_1-k_0>2\varepsilon$. Let
    $$H=\left\{h'\in H'\,\middle|\, k_0,k_1\in\chi\left(h'\circ\binom B A\right)\right\}.$$
    
    Now fixed $n\in\omega$, we can find in $H$ pairwise distinct embeddings $h_I\colon B\to \F$ for all $I\subseteq n$, since $H$ is infinite. Note that, by construction, $h_I(B)\cap h_{I'}(B)=\varnothing$ for all $I\neq I'\subseteq n$, and that the substructure of $\F$ with domain $h_I(B)\cup h_{I'}(B)$ is isomorphic to $B\sqcup B$.
    
    By assumption for every $I\subseteq n$ we can now find a pair of embeddings $h_I^0,h_I^1\colon A\to B$ such that $\chi(h_I\circ h_I^i)=k_i$ for $i\in\{0,1\}$.
    For $i<n$ we define a partial isomorphism $\widetilde{g}_i$ of $F$ with domain $\bigcup_{I\subseteq n}h_Ih^1_I(A)$ as follows. For every $I\subseteq n$, $\widetilde{g}_i\circ h_I\circ h^1_I=h_I\circ h^0_I$ if $i\in I$ and $\widetilde{g}_i\circ h_I\circ h^1_I=h_I\circ h^1_I$ if $i\not\in I$. That is, for $I\subseteq n$ and $x\in h_Ih^1_I(A)$, we have 
    $$\widetilde{g}_i(x)=\begin{cases}h_Ih_I^0(h_Ih_I^1)^{-1}(x) & \text{ if $i\in I$} \\
    	x & \text{ if $i\not\in I$.}\end{cases}$$
    
    Note that there is indeed such a partial isomorphism of $F$: for every $I\subseteq n$, $\widetilde{g_i}$ is either mapping $h_I\circ h^1_I(A)$ to $h_I\circ h^0_I(A)$, if $i\in I$, or is the identity on $h_I\circ h^1_I(A)$, if instead $i\not\in I$. Since the embeddings where chosen so that the substructure of $\F$ with underlying set $h_I(B)\cup h_{I'}(B)$ is isomorphic to $B\sqcup B$ for all $I\neq I'\subseteq n$, we can choose how $\widetilde{g}_i$ behaves on $I$ independently from the choices made for $I'\neq I$. 
	Finally let, for $i<n$, $g_i\colon \F\to \F$ be any automorphism of $\F$ extending the partial isomorphism $\widetilde{g_i}$ defined above. Suppose now that $f\in\RUC_b(G)$ is a function with $\|f-\chi\|_\infty<\varepsilon$. We want to show that $f$ cannot be null, which is equivalent to saying that $\chi\not\in(\mathcal A)_\varepsilon$. Given $n\in\omega$ let $x_I$ for $I\subseteq n$, and $g_i$ for $i<n$ be as just constructed, and take $r,s$ such that $k_0<r<s<k_1$ and $|k_0-r|>\varepsilon$, $|k_1-s|>\varepsilon$. We claim that those choices of $x_I$, $g_i$, $r$ and $s$ witness that $f$ is not null by the characterization in Proposition \ref{prop: Ibarlucia nullness}. Indeed if $i\in I$ we have 
	$$\chi(g_ix_I)=\chi(g_i\circ h_I\circ h_I^1)=\chi(h_I\circ h_I^0)=k_0,$$ which implies that $f(g_ix_I)<r$, since $|f(x_ig_I)-\chi(g_ix_I)|<\varepsilon$. If instead $i\not\in I$ we have $$\chi(g_ix_I)=\chi(g_i\circ h_I\circ h_I^1)=\chi(h_I \circ h_I^1)=k_1,$$ which implies that $f(g_ix_I)>s$, since $|f(g_ix_I)-\chi(g_ix_I)|<\varepsilon$, showing that $f$ is not null by the characterization in Proposition \ref{prop: Ibarlucia nullness}. In other words $\chi\not\in(\mathcal A)_\varepsilon$, concluding the proof.
\end{proof}

    The following is now an immediate consequence of the equivalence between $(1)$ and $(2)$ in Corollary \ref{cor: NVT for null/tame}.
 
	\begin{thm}\label{thm: FJEP implies no nontrivial null flows}
		Let $\mathcal F$ be a \Fraisse class such that $\Age(\F)$ has the free joint embedding property. Then every null $\Aut(\mathcal F)$-flow has a fixed point. Equivalently  $M_n(\Aut(\mathcal F))$ is trivial.
	\end{thm}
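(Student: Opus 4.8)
The plan is to read off the theorem from the two main results already established: the approximate null Ramsey property for free joint embedding classes (Theorem~\ref{thm: FJEP implies null Ramsey property}) and the dynamical translation provided by Corollary~\ref{cor: NVT for null/tame}. Write $G=\Aut(\mathcal F)$ and let $\A\subseteq\RUC_b(G)$ be the algebra of null functions. Since $\Age(\mathcal F)$ has the free joint embedding property, Theorem~\ref{thm: FJEP implies null Ramsey property} says that for every $\varepsilon>0$ the structure $\mathcal F$ has the Ramsey property up to $2\varepsilon$ for the finite colourings in $(\A)_\varepsilon$. This is exactly condition $(2)$ of Corollary~\ref{cor: NVT for null/tame}, so by the equivalence $(1)\Leftrightarrow(2)$ of that corollary condition $(1)$ holds as well, i.e.\ every null $G$-flow has a fixed point.

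It then remains to record that ``every null $G$-flow has a fixed point'' is equivalent to ``$M_n(G)$ is trivial'', which is immediate from Definition~\ref{defn: M_n(G) and M_t(G)}. For one direction, $M_n(G)$ is itself a null $G$-flow, so by the above it has a fixed point $x$; minimality forces $M_n(G)=\overline{G\cdot x}=\{x\}$. For the converse, if $M_n(G)$ is a singleton then, given an arbitrary null $G$-flow $X$, compactness together with Zorn's lemma produces a minimal subflow $Y\subseteq X$, which is again null since subflows of null flows are null by \cite[Corollary 5.5]{KerrLi}; by universality $Y$ is a factor of $M_n(G)$, hence a singleton, and that point is fixed in $X$.

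I do not expect any genuine obstacle here: the substantive combinatorial work sits in Theorem~\ref{thm: FJEP implies null Ramsey property}, and the machinery converting it into a fixed point statement sits in Corollary~\ref{cor: NVT for null/tame} (which rests on Theorem~\ref{thm: NVT}). The only point that needs a moment's attention — and it was already dispatched in the discussion preceding Corollary~\ref{cor: NVT for null/tame} — is the verification that the class of null $\Aut(\mathcal F)$-flows meets the hypotheses of Theorem~\ref{thm: NVT}: the associated ambits are closed under suprema and factors, and every null flow admits a point whose orbit closure is null, both by \cite[Corollary 5.5]{KerrLi}. Given all this, the proof is a one-line application of the cited results plus the standard remark about $M_n(G)$.
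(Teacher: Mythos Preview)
Your proof is correct and follows exactly the paper's approach: the theorem is stated there as an immediate consequence of the equivalence $(1)\Leftrightarrow(2)$ in Corollary~\ref{cor: NVT for null/tame}, combined with Theorem~\ref{thm: FJEP implies null Ramsey property}. You simply add a few extra sentences spelling out the equivalence with $M_n(G)$ being trivial and reiterating why the hypotheses of Theorem~\ref{thm: NVT} are met, which the paper leaves implicit.
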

    
	\subsection{Tameness, fixed points, the Henson property and the Hrushovski property}
	By the previous section we already know that the \Fraisse classes of finite graphs and finite sets have the null Ramsey property, since they are free joint embedding classes. In this section we show that in fact they have the tame Ramsey property. 
	
	\begin{defn}
		Let $\F$ be a \Fraisse class. We say that $\F$ has the \emph{Henson property} if for every countable $A\subseteq \F$, there is $h\in\binom{\F}{A}$ such that if $g$ is any automorphism of $h(A)$, $g$ extends to an automorphism of $\F$. 
	\end{defn}

    Before proceeding with the main results of this section we go on an historical detour concerning the Henson property. 

    \begin{remark} \label{rem: FA implies Henson}
    We could not find whether the Henson property has been named before in the literature, so we adopted this name, since it was first shown to hold for the random graph and the $K_n$-free random graphs (also called the Henson graphs) by Henson \cite[Theorem 3.1]{Henson}. A natural strengthening, which we will call the \emph{strong Henson property} would be to require that every automorphism of $h(A)$ extends uniquely to an automorphism of $\F$ (indeed in \cite{Henson} the strong Henson property is established for the graphs mentioned above). The strong Henson property has been established for all free amalgamation classes in the PhD thesis \cite{Bilge} and later expanded to the wider class of \Fraisse classes with a stationary independence relation in \cite{Mueller}. There exists in the literature a similar but distinct notion of \emph{universal \Fraisse structure $\F$}, that is a \Fraisse structure such that for every $A\subseteq \F$, $\Aut(F)$ contains a copy of $\Aut(A)$. For examples all \Fraisse structures that admit Kat\v{e}tov functors are universal \cite{kubis2017katvetov}.
    \end{remark}

	\begin{defn}
		Let $F$ be a \Fraisse structure. We say that $\F$ has the \emph{Hrushovski property} or that $\Age(\F)$ is an \emph{EPPA class} (where EPPA stands for the \emph{extension property for partial automorphisms}) if for every $A\in\Age(\F)$ there exists a $B\in\Age(\F)$ and an embedding $h\colon A\to B$ such that for every partial isomorphism $p\colon A_1\to A_2$ of $h(A)$, $p$ extends to an automorphism of $B$. We call the embedding $h\colon A\to B$ an EPPA witness for $A$. 
	\end{defn}

We are now ready to prove the tame Ramsey property for the appropriate class of \Fraisse structures, from which we will then deduce dynamical consequences for their automorphism groups.

\begin{thm}\label{thm: Henson+Hrushovski+FJEP implies tame Ramsey property}   
	Let $\F$ be a \Fraisse structure, $G=\Aut(\F)$, $\mathcal A\subseteq\RUC_b(G)$ the algebra of tame functions and $\varepsilon>0$. If $\F$ has the Henson and the Hrushovski properties and $\Age(\F)$ has the free joint embedding property, then $\F$ has the Ramsey property up to $2\varepsilon$ for the finite colourings in $(\mathcal A)_\varepsilon$. 
\end{thm}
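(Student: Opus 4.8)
The plan is to run the contrapositive argument of Theorem~\ref{thm: FJEP implies null Ramsey property} essentially verbatim, with one structural change forced by the fact that the characterization of non-tame colourings in Corollary~\ref{cor: not null/tame colouring}(2) (equivalently Proposition~\ref{prop: tameness}) asks for a \emph{single} infinite sequence $(g_i)_{i<\omega}$ of automorphisms, rather than, for each $n$, a finite list obtained by extending a finite partial isomorphism of $\F$. Producing such an infinite sequence is exactly what the Henson and Hrushovski hypotheses are for. Concretely, I would assume $A,B\in\Age(\F)$ and finite colourings $\chi_1,\dots,\chi_n$ of $\binom{\F}{A}$ are given with no $h\in\binom{\F}{B}$ making $h\circ\binom{B}{A}$ monochromatic up to $2\varepsilon$ for all $\chi_j$ at once, and show that some $\chi_j\notin(\mathcal A)_\varepsilon$.

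First I would use the Hrushovski property to fix an EPPA witness $e\colon B\to B^{*}$ with $B^{*}\in\Age(\F)$, and form $D=\bigsqcup_{m<\omega}B^{*}_m$ with $B^{*}_m\cong B^{*}$. By the free joint embedding property $\Age(D)\subseteq\Age(\F)$, so $D$ embeds in $\F$; and by the Henson property I may take the embedding $\iota\colon D\to\F$ so that \emph{every} automorphism of $\iota(D)$ extends to an automorphism of $\F$. Put $C_m=\iota(B^{*}_m)$ and let $h''_m\colon B\to\F$ be the embedding of $B$ into $C_m$ obtained by composing $e$ with the restriction of $\iota$ to the $m$-th piece, so the $h''_m$ are infinitely many embeddings of $B$ with images in pieces carrying no relations to one another. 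Then, exactly as in the null proof, two rounds of pigeonhole---first over the finitely many colourings, then over the finitely many colour values---give a single colouring $\chi$ and reals $k_0<k_1$ with $k_1-k_0>2\varepsilon$ such that, for all $m$ in an infinite $S\subseteq\omega$, both $k_0$ and $k_1$ lie in $\chi\bigl(h''_m\circ\binom{B}{A}\bigr)$; I fix $p^{0}_m,p^{1}_m\colon A\to B$ with $\chi(h''_m\circ p^{j}_m)=k_j$. Finally I fix an injection $I\mapsto m(I)$ from the finite subsets of $\omega$ into $S$ and set $h_I:=h''_{m(I)}\circ p^{1}_{m(I)}\in\binom{\F}{A}$.

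The heart of the construction is the sequence $(g_i)_{i<\omega}$. For each $i$, I define $g_i$ on $\iota(D)$ piecewise: on $C_{m(I)}$, if $i\in I$ I let $g_i$ restrict to an automorphism of $C_{m(I)}$ extending the partial isomorphism of $h''_{m(I)}(B)$ that sends $h''_{m(I)}(p^{1}_{m(I)}(A))$ onto $h''_{m(I)}(p^{0}_{m(I)}(A))$ (such an automorphism exists because $C_{m(I)}$ is a copy of the EPPA witness $B^{*}$), and if $i\notin I$ I let $g_i$ be the identity on $C_{m(I)}$; on every remaining piece I let $g_i$ be the identity. Since $\iota(D)$ has no relations between distinct pieces, these independent choices assemble into an automorphism of $\iota(D)$, which I extend to $g_i\in\Aut(\F)$ using the Henson property. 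It then remains to check, exactly as in the null argument, that for any $f\in\RUC_b(G)$ with $\|f-\chi\|_\infty<\varepsilon$ and any $r<s$ with $k_0<r<s<k_1$, $r-k_0>\varepsilon$, $k_1-s>\varepsilon$, one has $\chi(g_ih_I)=k_0$, hence $f(g_ih_I)<r$, whenever $i\in I$, and $\chi(g_ih_I)=k_1$, hence $f(g_ih_I)>s$, whenever $i\notin I$ (which in particular covers the case $i\notin I$ and $i\leq\max I$). By Proposition~\ref{prop: tameness} this shows $f$ is not tame, so $f\notin\mathcal A$; as this holds for every $f$ within $\varepsilon$ of $\chi$, it shows $\chi\notin(\mathcal A)_\varepsilon$, which is the contrapositive we wanted.

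The main obstacle---and the reason the hypotheses here are heavier than free joint embedding alone---is precisely the jump from finitely many to infinitely many automorphisms: an automorphism of $\F$ cannot in general be prescribed on infinitely many substructures at once, since ultrahomogeneity only extends \emph{finite} partial isomorphisms, so the null construction does not simply iterate. The Henson property defuses this by replacing ``extend a partial isomorphism of $\F$'' with ``extend an automorphism of the single fixed countable structure $\iota(D)$'', the Hrushovski property supplies inside each copy of $B^{*}$ an honest automorphism realizing the local partial isomorphism $h''_{m(I)}(p^{1}_{m(I)}(A))\to h''_{m(I)}(p^{0}_{m(I)}(A))$, and the free joint embedding property ensures the pieces $C_m$ are relation-free over each other so that the local choices are mutually independent and glue. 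Everything else---the two pigeonhole steps, the $\varepsilon$-bookkeeping, and the identification of $\binom{\F}{A}$ with cosets in $G$---is routine and mirrors the proof of Theorem~\ref{thm: FJEP implies null Ramsey property}.
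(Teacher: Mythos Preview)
Your proposal is correct and follows essentially the same approach as the paper's proof: contrapositive, EPPA witness $B\to B^{*}$, form the free disjoint union $D$ of countably many copies, embed via Henson, two pigeonhole steps to isolate $\chi$ and $k_0<k_1$, then assemble the $g_i$'s piecewise as automorphisms of $\iota(D)$ (using EPPA locally) and extend to $\Aut(\F)$ by Henson. Your bookkeeping with the injection $I\mapsto m(I)$ and the explicit ``identity on every remaining piece'' is if anything slightly cleaner than the paper's version.
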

\begin{proof} 
    We prove the contrapositive. Let $A,B\in\Age(\F)$ and $\chi_1,\ldots,\chi_n$ be colourings of $\binom \F A$ with $k$ colours such that no $h\in\binom \F B$ is monochromatic up to $2\varepsilon$ for all of the colourings $\chi_i$ at the same time. We want to show that at least one of the colourings $\chi_i$ is not in $(\A)_\varepsilon$. Let $f\colon B\to C$ be an EPPA witness for $B$ and note that, since $\binom C B$ is nonempty, there is no $h\in\binom \F C$ which is monochromatic for all the colourings $\chi_i$ at the same time. Take countably many copies $B_i\cong B$, $C_i\cong C$ and embeddings $f_i\colon B_i\to C_i$, for $i<\omega$, such that, for every $i<\omega$, $f_i\colon B_i\to C_i$ is an EPPA witness for $B_i$. Let $D=\bigsqcup_{i<\omega} C_i$ and note that, since $\Age(\F)$ has the free joint embedding property, $D$ is a countable structure embeddable in $\F$. Since $\F$ has the Henson property we can find an embedding $g\colon D\to F$ such that for all $h\in\Aut(g(D))$, $h$ extends to an automorphism of $\F$. The embedding $g$ also gives infinitely many embeddings of $B$ in $\F$ with pairwise disjoint images and pairwise no relations between the images. Explicitly let $$H'=\left\{h'\in\binom \F B\,\middle|\, \exists i\in\omega\quad h'=g\upharpoonright f_i(B_i)\right\}.$$

    As in the proof of Theorem \ref{thm: FJEP implies null Ramsey property}, by applying the pigeonhole principle twice, we can find $i\leq n$ and $k_0<k_1$ with $k_1-k_0>2\varepsilon$ such that for $\chi=\chi_i$ the set $$H=\left\{h'\in H'\,\middle|\, k_0,k_1\in\chi\left(h'\circ\binom B A\right)\right\}$$ is infinite, so that we can find, for $I\subseteq_{\mathrm{fin}}\omega$, pairwise distinct embeddings $h_I\colon B\to \F$ in $H$. Note that, by construction, every $h_I$ can be extended to an embedding $\widetilde{h_I}\colon C\to \F$, such that the inclusion of $h_I(B)$ in $\widetilde{h_I}(C)$ is an EPPA witness for $h_I(B)$. Moreover we can find, for all $I\subseteq_{\mathrm{fin}}\omega$, embeddings $h_I^0,h_I^1\colon A\to B$ such that $\chi(h_I\circ h_I^i)=k_i$, $i\in\{0,1\}$.
    
    As in the proof of Theorem \ref{thm: FJEP implies null Ramsey property}, for every $I\subseteq_{\mathrm{fin}}\omega$, we can find a partial automorphism $g_I$ of $h_I(B)$, defined on $h_I(A)$ such that $g_I\circ h_I\circ h_I^1=h_I\circ h_I^0$. Since $\widetilde{h_I}(C)$ is an EPPA witness for $h_I(B)$, $g_I$ extends to an element of   $\Aut(\widetilde{h_I}(C))$, which we also call $g_I$. Consider now, for $i<\omega$, the automorphism $\widetilde{g_i}\colon g(D)\to g(D)$ given by $\widetilde{g_i}=\bigsqcup_I g_I'$, where $g_I'\colon \widetilde{h_I}(C)\to \widetilde{h_I}(C)$ is $g_I$ if $i\in I$ and is the identity of $\widetilde{h_I}(C)$ if $i\not\in I$. Note that, by construction, $\widetilde{g_i}\in\Aut(g(D))$, as there are no relations between the copies of $C$ in $D$. Finally, since $g$ was chosen to be an embedding as given by the Henson property, let $g_i$ extend $\widetilde{g_i}$ to $F$. Define $x_I=h_I\circ h_I^1$ and suppose $f\in\RUC_b(G)$ is such that $\|f-\chi\|_\infty<\varepsilon$. We want to show that $f$ is not tame by the characterization established in Proposition \ref{prop: tameness}. Take $r,s$ with $k_0<r<s<k_1$, $|k_0-r|>\varepsilon$ and $|k_1-s|>\varepsilon$. We claim that $r,s,x_I$ and $g_i$ as just constructed witness that $f$ is not tame. Indeed if $i\in I$ we have
    $$\chi(g_ix_I)=\chi(g_i\circ h_I\circ h_I^1)=\chi(g_I\circ h_I\circ h_I^1)=\chi(h_I\circ h_I^0)=k_0,$$
    which implies that $f(g_ix_I)<r$, since $|f(g_ix_I)-\chi(g_ix_I)|<\varepsilon$. If instead $i\not\in I$ we have
    $$\chi(g_ix_I)=\chi(g_i\circ h_I\circ h_I^1)=\chi(\mathrm{Id}_{\widetilde{h_I}(C)}\circ h_I\circ h_I^1)=\chi(h_I\circ h_I^1)=k_1,$$
    which implies that $f(x_ig_I)>s$, since $|f(g_ix_I)-\chi(g_ix_I)|<\varepsilon$, showing that $f$ is not tame by the characterization in Proposition \ref{prop: tameness}.
    
\end{proof}

The following is now an immediate consequence of the equivalence between $(1)$ and $(2)$ in Corollary \ref{cor: NVT for null/tame}.

 \begin{thm}\label{thm: Henson, Hrushovski and FJEP implies no tame flows}
     Let $\F$ be a \Fraisse structure such that $F$ has the Henson property and the Hrushovski property and such that $\Age(\F)$ has the free joint embedding property. Then every tame $\Aut(\F)$-flow has a fixed point. Equivalently $M_t(\Aut(\F))$ is a singleton.
 \end{thm}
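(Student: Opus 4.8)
The plan is simply to feed the combinatorial input of Theorem~\ref{thm: Henson+Hrushovski+FJEP implies tame Ramsey property} into the dynamical dictionary provided by Corollary~\ref{cor: NVT for null/tame}. Write $G=\Aut(\F)$ and let $\mathcal A\subseteq\RUC_b(G)$ be the algebra of tame functions. First I would observe that the three hypotheses in the statement — $\F$ has the Henson property, $\F$ has the Hrushovski property, and $\Age(\F)$ has the free joint embedding property — are exactly the hypotheses of Theorem~\ref{thm: Henson+Hrushovski+FJEP implies tame Ramsey property}. Applying that theorem for every $\varepsilon>0$ yields: for all $\varepsilon>0$, $\F$ has the Ramsey property up to $2\varepsilon$ for the finite colourings in $(\mathcal A)_\varepsilon$. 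This is verbatim assertion~(2) of Corollary~\ref{cor: NVT for null/tame} in the tame case.

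Next I would invoke the equivalence $(1)\Leftrightarrow(2)$ of Corollary~\ref{cor: NVT for null/tame}, specifically the implication $(2)\Rightarrow(1)$, to conclude that every tame $G$-flow has a fixed point. No extra checking of the hypotheses of Theorem~\ref{thm: NVT} is needed: the discussion preceding Corollary~\ref{cor: NVT for null/tame} already records that the class of tame $G$-flows is closed under suprema and factors (Corollary~6.5 of~\cite{KerrLi}), that every tame flow admits a point whose orbit closure is tame, and — crucially — that for $\X$ the class of tame flows the associated algebra $\mathcal A$ from Theorem~\ref{thm: NVT} is precisely the algebra of tame functions, via the identification of $G^{\langle f\rangle}$ with $\overline{G\bullet f}$.

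It then remains to translate ``every tame $G$-flow has a fixed point'' into ``$M_t(G)$ is a singleton''. For this I would note that $M_t(G)$ is itself a minimal tame $G$-flow, so the fixed point it contains has dense orbit by minimality, forcing $M_t(G)$ to be a single point; conversely (not needed here, but for completeness) if $M_t(G)$ is a singleton then any tame flow contains a minimal subflow by Zorn's lemma and compactness, this subflow is tame and hence a factor of the singleton $M_t(G)$, so it reduces to a fixed point. This mirrors the familiar equivalence, recalled in the introduction, between extreme amenability of $G$ and triviality of $M(G)$.

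I do not anticipate a genuine obstacle, since all the substantive work lives upstream: Theorem~\ref{thm: Henson+Hrushovski+FJEP implies tame Ramsey property} carries the combinatorial weight and Theorem~\ref{thm: NVT} the functional-analytic bridge. The one point warranting care is bookkeeping of which algebra $\mathcal A$ the Ramsey statement refers to — one must use the identification, recorded just before Corollary~\ref{cor: NVT for null/tame}, that a function $f\in\RUC_b(G)$ is tame if and only if $G\curvearrowright G^{\langle f\rangle}$ is tame, so that the algebra $\mathcal A$ appearing in Theorem~\ref{thm: NVT} coincides with the algebra of tame functions used throughout this section.
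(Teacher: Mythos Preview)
Your proposal is correct and follows exactly the paper's approach: the paper states this theorem as ``an immediate consequence of the equivalence between (1) and (2) in Corollary~\ref{cor: NVT for null/tame}'', having just established condition~(2) via Theorem~\ref{thm: Henson+Hrushovski+FJEP implies tame Ramsey property}. Your additional paragraph spelling out why the fixed-point statement is equivalent to $M_t(G)$ being a singleton is accurate and slightly more detailed than what the paper provides.
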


As mentioned in Remark \ref{rem: FA implies Henson} all free amalgamation classes have the Henson property. By a result of Siniora and Solecki \cite[Corollary 4.6]{SinioraSolecki} free amalgamation classes also have the Hrushovski property. Together with the easy observation that, for the age of a \Fraisse structure in a finite relational language, the free amalgamation property implies the free joint embedding property we immediately obtain the following corollary from Theorem \ref{thm: Henson, Hrushovski and FJEP implies no tame flows}.
 
 \begin{cor}\label{thm: free amalgamation implies no tame flows}
    Let $\F$ be a \Fraisse structure such that $\Age(\F)$ has the free amalgamation property. Then every tame $\Aut(\F)$-flow has a fixed point. Equivalently $M_t(\Aut(\F))$ is trivial.
\end{cor}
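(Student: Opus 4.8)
The plan is to deduce this as a special case of Theorem \ref{thm: Henson, Hrushovski and FJEP implies no tame flows}, whose hypotheses are that $\F$ has the Henson property, that $\F$ has the Hrushovski property, and that $\Age(\F)$ has the free joint embedding property. So it suffices to check that free amalgamation implies each of these three conditions, and then to invoke that theorem (equivalently, the equivalence of $(1)$ and $(2)$ in Corollary \ref{cor: NVT for null/tame} applied to the tame algebra).

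First, the free joint embedding property is essentially free of charge: we work throughout in a relational language, so the empty structure is a substructure of every member of $\Age(\F)$, and for $B,C\in\Age(\F)$ the free amalgam of $B$ and $C$ over the empty structure is precisely $B\sqcup C$. Thus free amalgamation, applied with $A$ the empty structure, yields $B\sqcup C\in\Age(\F)$, which is exactly the free joint embedding property; this was already observed after the definition of free amalgamation. Second, for the Henson property one needs, given a countable $A\subseteq\F$, an embedding $h$ of $A$ into $\F$ such that every automorphism of $h(A)$ extends to an automorphism of $\F$. As recalled in Remark \ref{rem: FA implies Henson}, this holds for all free amalgamation classes — indeed in the stronger form in which the extension is unique — going back to Henson for the random graph and the Henson graphs and established in general in \cite{Bilge} and \cite{Mueller} via the stationary independence relation carried by such a \Fraisse limit: one takes the generic ("free") copy of $A$ inside $\F$ and extends an automorphism of its image by a back-and-forth in which each step uses free amalgamation to avoid creating new relations. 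Third, for the Hrushovski property one invokes \cite[Corollary 4.6]{SinioraSolecki}, which shows that every free amalgamation class is an EPPA class, so every $A\in\Age(\F)$ admits an EPPA witness in $\Age(\F)$.

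Having verified the three hypotheses, Theorem \ref{thm: Henson, Hrushovski and FJEP implies no tame flows} applies verbatim and gives that every tame $\Aut(\F)$-flow has a fixed point, equivalently that $M_t(\Aut(\F))$ is a singleton, which is the assertion.

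The genuinely substantive content is imported rather than proved here: the Henson property for free amalgamation classes, and — more seriously — EPPA for free amalgamation classes, the latter resting on the Herwig--Lascar/Hodkinson--Otto circle of ideas. If one insisted on a self-contained argument, reproving EPPA would be the main obstacle; granting those two external inputs, however, the corollary is an immediate specialization of the preceding theorem, with the free-joint-embedding check being entirely routine.
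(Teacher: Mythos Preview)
Your proposal is correct and follows exactly the paper's own approach: it deduces the corollary from Theorem \ref{thm: Henson, Hrushovski and FJEP implies no tame flows} by checking that free amalgamation implies the free joint embedding property (via amalgamation over the empty structure), the Henson property (via Remark \ref{rem: FA implies Henson} and \cite{Bilge,Mueller}), and the Hrushovski property (via \cite[Corollary 4.6]{SinioraSolecki}). Your commentary on which imported ingredients carry the real weight is accurate and matches the paper's presentation.
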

	
\printbibliography

\end{document}